 \documentclass[12pt]{article}
\usepackage{authblk}
\usepackage{amsfonts,amsmath,amssymb,amsthm,mathrsfs,enumerate}
\usepackage{amsmath,bm}
\usepackage{graphicx}
\usepackage{color,hyperref}
\usepackage{ifpdf}
\usepackage{epstopdf}
\usepackage{subfigure}
\usepackage[numbers,sort&compress]{natbib}
\usepackage{setspace}
\usepackage{indentfirst}
\usepackage{tipa}
\usepackage{esvect} 

\usepackage{booktabs}
\usepackage{tabularx}

\usepackage{enumitem}

\hypersetup{hidelinks}

\usepackage{mathtools} 

\usepackage{graphicx,color, booktabs,multirow}
\usepackage{tikz,chemfig}
\usepackage{graphicx,booktabs,multirow,mathrsfs}

\usepackage{float} 

\usepackage{calc}
\usetikzlibrary{backgrounds}
\usetikzlibrary{shadings,intersections}
\usetikzlibrary{arrows}
\usetikzlibrary{shapes,shapes.geometric,shapes.misc}
\pgfdeclarelayer{edgelayer}
\pgfdeclarelayer{nodelayer}
\usetikzlibrary{automata}
\pgfsetlayers{background,edgelayer,nodelayer,main}
\tikzstyle{none}=[inner sep=0mm]
\tikzstyle{every loop}=[]

\tikzstyle{dotted}=[dash pattern=on \pgflinewidth off 2pt]
\tikzstyle{dashed}=[dash pattern=on 3pt off 3pt]

\usetikzlibrary{backgrounds}
\usetikzlibrary{arrows}
\usetikzlibrary{shapes,shapes.geometric,shapes.misc}
\pgfdeclarelayer{edgelayer}
\pgfdeclarelayer{nodelayer}
\pgfsetlayers{background,edgelayer,nodelayer,main}
\tikzstyle{none}=[inner sep=0mm]
\tikzstyle{every loop}=[]

\tikzstyle{dotted}=[dash pattern=on \pgflinewidth off 2pt]
\tikzstyle{dashed}=[dash pattern=on 3pt off 3pt]

\newcommand \tikzp[2]
{
	\begin{center}
		\begin{tikzpicture}[scale=#1]
			#2
		\end{tikzpicture}
	\end{center}
}

\tikzstyle{new style 0}=[fill=black, draw=black, shape=circle]
\tikzstyle{red style 1}=[fill=red, draw=black, shape=circle]
\tikzstyle{blue style 2}=[fill=blue, draw=black, shape=circle]
\tikzstyle{white style 4}=[fill=white, draw=black, shape=circle]
\tikzstyle{bklack style 5}=[fill=black, draw=black, shape=rectangle]
\tikzstyle{red style 3}=[fill=red, draw=black, shape=rectangle]
\tikzstyle{yellow style 7}=[fill=yellow, draw=black, shape=rectangle]
\tikzstyle{new style 8}=[fill={rgb,255: red,0; green,132; blue,0}, draw={rgb,255: red,0; green,131; blue,0}, shape=circle]

\tikzstyle{new edge style 0}=[-]
\tikzstyle{new edge style 1}=[-, draw=red]
\tikzstyle{new edge style 2}=[-, draw=blue]
\tikzstyle{new edge style 3}=[-, draw={rgb,255: red,0; green,156; blue,0}]

\tikzstyle{cblue}=[circle, draw, thin,fill=blue!20, scale=0.5]

\tikzstyle{new style 0}=[fill=black, draw=black, shape=circle]
\tikzstyle{red style 1}=[fill=red, draw=black, shape=circle]
\tikzstyle{blue style 2}=[fill=blue, draw=black, shape=circle]
\tikzstyle{white style 4}=[fill=white, draw=black, shape=circle]
\tikzstyle{bklack style 5}=[fill=black, draw=black, shape=rectangle]
\tikzstyle{red style 3}=[fill=red, draw=black, shape=rectangle]
\tikzstyle{yellow style 7}=[fill=yellow, draw=black, shape=rectangle]
\tikzstyle{new style 8}=[fill={rgb,255: red,0; green,132; blue,0}, draw={rgb,255: red,0; green,131; blue,0}, shape=circle]

\tikzstyle{new edge style 0}=[-]
\tikzstyle{new edge style 1}=[-, draw=red]
\tikzstyle{new edge style 2}=[-, draw=blue]
\tikzstyle{new edge style 3}=[-, draw={rgb,255: red,0; green,156; blue,0}]

\theoremstyle{definition}

\theoremstyle{plain}
\newtheorem{theorem}{\bf Theorem}[section]
\newtheorem{proposition}[theorem]{\bf Proposition}

\newtheorem{lemma}[theorem]{\bf Lemma}
\newtheorem{corollary}[theorem]{\bf Corollary}
\newtheorem{conjecture}[theorem]{\bf Conjecture}
\newtheorem{problem}[theorem]{\bf Problem}

\newcommand\red[1] {{\color{red} #1}}
\newcommand\green[1] {{\color{green} #1}}

\newcommand\blue[1] {{\bf \color{blue} #1}}

\usepackage{stmaryrd}

\def \vect {\vec{T}}

\newcommand \brk[1]
{
	\llbracket #1\rrbracket
}

\def \R {{\mathbb R_+}}
\def \N {{\mathbb N}}
\def \vecx{\bar {\bf x}}

\def \sett {\vec{\cal T}}

\def \usecolour
{\newcommand {\rered}{\red}
	\newcommand {\reblue} {\blue}
	\newcommand {\regreen} {\green}
}

\usecolour

\newcommand \them[2]
{\begin{theorem}
		\label{#1} #2
	\end{theorem}
}

\newcommand \lemm[2]
{\begin{lemma}
		\label{#1} #2
	\end{lemma}
}

\newcommand \prop[2]
{\begin{proposition}
		\label{#1} #2
	\end{proposition}
}

\newcommand \prom[2]
{\begin{problem}
		\label{#1} #2
	\end{problem}
}

\newcommand \cor[2]
{\begin{corollary}
		\label{#1} #2
	\end{corollary}
}

\newcommand \equ[2]
{\begin{equation}\label{#1}
		#2
	\end{equation}
}

\newcommand \eqn[2]
{\begin{eqnarray}\label{#1}
		#2
	\end{eqnarray}
}

\newcommand \Floor[1]
{
	\left	\lfloor {#1}\right \rfloor 
}

\newcounter{countcase}
\def\incase{\addtocounter{countcase}{1}{\noindent {\bf Case \thecountcase}: }}

\newcounter{countclaim}
\def\inclaim{\addtocounter{countclaim}{1}
	{\vspace{0.2 cm}\noindent {\bf Claim \thecountclaim}: }}

\newcommand{\proofend}{{\hfill$\Box$}}

\newcommand {\relabel}[1] {\label{#1} \red{[*: #1]}}
\newcommand {\rebibitem}[1] {\bibitem{#1} \red{[*: #1]}}

\def\relabel {\label}

\def \scre{{\mathscr W}}

\def \scrl{{\mathscr L}}

\renewcommand {\rebibitem}[1] {\bibitem{#1}}

\begin{document}
	
	\title{A Recursive Characterization of Laplacian Spectral Radii of Trees with Bounded Maximum Degree		
}

\author[1\thanks{Corresponding author. Email: fengming.dong@nie.edu.sg and donggraph@163.com.}]
{Fengming Dong}

\author[2\thanks{Email: ruixuezhang7@163.com.}]
{Ruixue Zhang}

	\affil[1]{\footnotesize National Institute of Education, Nanyang Technological University, Singapore}
	
	\affil[2]{\footnotesize School of Mathematics and Statistics, Qingdao University, China}

\date{}

\maketitle{}

\baselineskip 0.6 cm

\begin{abstract}
	For a positive integer $r$ and a real number $\alpha\ge 2$, let
	$\mathscr L_r(\alpha)$ be the set of positive real numbers
	containing
	$\alpha-1$ and closed under the following operation: if
	$q_1,\ldots,q_s\in\mathscr L_r(\alpha)$, where $1\leq s\leq r-1$, and
	$
	q=\alpha-1-s-\sum\limits_{i=1}^{s}q_i^{-1}>0,
	$
	then $q\in\mathscr L_r(\alpha)$.  We prove that there exists a tree $T$ with
	$\Delta(T)\leq r$ and Laplacian spectral radius $\mu(T)=\alpha$ if and only if
	$(\alpha-1)^{-1}\in\mathscr L_r(\alpha)$.  Consequently, the Laplacian spectral
	radii of nontrivial trees are precisely the real numbers $\alpha\geq2$ satisfying
	$(\alpha-1)^{-1}\in\mathscr L_{\lfloor\alpha\rfloor-1}(\alpha)$.  As an application,
	we prove that, for integers $k\geq2$ and $r\geq2$, a tree $T$ with
	$\mu(T)=k^2$ and $\Delta(T)=r$ exists if and only if
	$(k-1)^2+2\leq r\leq k^2-1$.
\end{abstract}

\vskip 2mm

\noindent\textbf{2020 Mathematics Subject Classification}: 
05C05; 05C20; 05C50; 15A18

\vskip 2mm

\noindent\textbf{Keywords}:
trees, Laplacian spectral radius,
maximum degree, recursive characterization,
inverse spectral problem


	
\section{Introduction}

\subsection{Background}

Unless stated otherwise, all graphs considered in this article are simple and undirected.
For a graph $G$, 
let $V(G), E(G)$ and $\Delta(G)$ denote its vertex set, edge set and maximum degree, respectively.
For any non-empty subset $S$ of $V(G)$,
let $G[S]$ denote the subgraph of $G$ induced by $S$.
For any $u\in V(G)$, 
let $N_G(u)$ (or simply $N(u)$) be the set of neighbors of $u$ in $G$,
and let  $d_G(u)$ (or simply $d(u)$) be
the size of $N(u)$, 
called the degree of $u$ in $G$.
For any integer $k>0$, let $\brk{k}$ denote the set $\{1, 2, \ldots, k\}$.

For a simple graph $G=(V, E)$ with $V=\{v_i: i\in \brk{n}\}$, the adjacency matrix $A(G)$ of $G$ according to the vertex ordering $v_1, v_2, \ldots, v_n$ is defined to be the $(0,1)$ matrix $(a_{i, j})_{n\times n}$, where $a_{i, j}=1$ if and only if $v_iv_j\in E$.
The {\it characteristic polynomial}
of $G$ is defined to be 
$p(G,x)=
\det(x I_n-A(G))$,
where $I_n$ is the identity matrix of size $n$. 
It is known that
$p(G,x)$ is independent of
the vertex ordering $v_1,v_2,\dots,v_n$, and 
all roots of $p(G,x)=0$ are real
numbers
(see~\cite{Brouwer, spectra:2010}).
The largest root of $p(G,x)$, 
denoted by $\lambda(G)$,
is called the 
{\it spectral radius} of $G$.

Let $\N$ and $\R$ 
denote the set of positive integers and the set of positive 
real numbers, respectively.
For any  $r\in \N$ and 
 $\alpha\ge 1$, 
 $\scre_r(\alpha)$  
 is the set of nonnegative 
 real numbers 
 containing $\alpha$ and 
 closed under the following operation:
 for any $q_1,q_2,\ldots,q_s\in 
  \scre_r(\alpha)\setminus \{0\}$
  with $q=\alpha-
  \sum\limits_{1\le i\le s}q_i^{-1}$, 
  if either $s<r$ and $q\ge 0$, 
  or $s=r$ and $q=0$, 
  then $q\in \scre_r(\alpha)$.
In \cite{dong2024}, 
we showed that 
there exists a tree $T$ with maximum degree 
$\Delta(T)\le r$
and 
$\lambda(T)=\alpha$
if and only if $\alpha^{-1}\in \scre_r(\alpha)$.
It follows that the set of 
spectral radii of non-trivial trees 
is exactly the set of  $\alpha\in \R$ such that 
$0\in \scre_r(\alpha)$
for $r=\Floor{\alpha^2}$.
Applying this conclusion, we  proved that for any positive integers $r$ and $k$, where $r\ge 2$, 
there exists a tree $T$ 
with $\Delta(T)=r$ and $\lambda(T)=\sqrt k$ 
if and only if $\frac 14 k+1<r\le k$.

The \textit{Laplacian matrix} of $G$,
denoted by $L(G)$, 
is defined to be 
$D(G)-A(G)$, 
where 
$D(G)=\operatorname{diag}(d(v_1),\ldots,d(v_n))$.
The {\it Laplacian spectral radius} of $G$, 
denoted by $\mu(G)$, 
is the largest eigenvalue of $L(G)$. 

In this article, we focus on 
the Laplacian spectral radii of 
trees.
Note that $K_1$ and $K_2$ are the only trees of orders $1$ and $2$, respectively, and their Laplacian spectral radii are 
$0$ and $2$, respectively. Now let $T$ be a tree of order $n\geq 3$. The fundamental order-extremal results state that
\equ{1}
{
2+2\cos\left(\frac{\pi}{n}\right)
=\mu(P_n)
\leq \mu(T)
\leq \mu(K_{1,n-1})
=n,
}
where equality on the left holds if and only if $T\cong P_n$, and equality on the right holds if and only if $T\cong K_{1,n-1}$;
see  ~\cite{PetrovicGutman2002,Gutman2002}. In particular, since $n\geq 3$,
$$
\mu(T)\geq 2+2\cos\left(\frac{\pi}{n}\right)
\geq 3>2.
$$

In terms of the maximum degree, the general lower bound of Grone and Merris
\cite{GroneMerris1994}, together with its equality characterization
\cite{ZhangLuo2002}, and the tree-specific upper bound of Stevanovi\'c \cite{Stevan2003}
yield
\begin{equation}
	\Delta(T)+1
	\leq \mu(T)
	< \Delta(T)+2\sqrt{\Delta(T)-1}
	\label{eq-1}
\end{equation}
for every tree with $\Delta(T)\geq 2$.
Moreover, equality in the lower bound holds precisely when $T$ is a star~\cite{ZhangLuo2002}.


A substantial body of work determines or characterizes the trees of a fixed order
having maximum Laplacian spectral radius subject to an additional prescribed
parameter, such as 
the number of pendant vertices~\cite{HongZhang2005}, matching number~\cite{Guo2003},  diameter~\cite{Guo2006Diameter}, maximum degree~\cite{YuLu2008}, and degree sequence~\cite{Zhang2008DegreeSequence}.
Related extremal and ordering results have also been obtained for trees with perfect matchings~\cite{YuanShaoHe2009,ChenQian2013}.  

\subsection{Problems and results}

\def \lsrt {\mathcal{R}_{L}^{\mathrm{nt}}}

Let $\lsrt
:=\{\mu(T):T\text{ is a non-trivial tree}\}$.
To the best of our knowledge, relatively little is known about the set $\lsrt$. It is therefore natural to ask whether this set admits a characterization, particularly from an algebraic perspective.

In this article, we 
provide a recursive characterization of 
the elements of $\lsrt$.

For any $r\in \N$ and  
$\alpha\ge 2$,  define 
the set $\scrl^{(n)}_r(\alpha)$ 
for each integer $n\ge 0$ as follows:
 \begin{enumerate}
 	[itemsep=0mm, label=(\roman*)]
 	\item $\scrl^{(0)}_r(\alpha)=\{\alpha-1\}$; 
 	
 	\item for any $n\in \N$, 
 	$\scrl^{(n)}_r(\alpha)$ is the 
 	smallest set with the following properties
 	that 
 	$\scrl^{(n-1)}_r(\alpha)
 	\subseteq \scrl^{(n)}_r(\alpha)$,
 	and 
 	for any 
 	$q_1,q_2,\ldots,q_s\in \scrl^{(n-1)}_r(\alpha)$, 
 	where $1\le s\le r-1$, 
 	if $q=\alpha-1-s
 	-\sum\limits_{1\leq i\leq s}q_i^{-1}>0$, then $q\in 
 	\scrl^{(n)}_r(\alpha)$.
 \end{enumerate}
By definition, every element
of  $\scrl^{(n)}_r(\alpha)$
is positive.
 Obviously, 
 $\scrl^{(n-1)}_r(\alpha)
 \subseteq \scrl^{(n)}_r(\alpha)$
 for every $n\in \N$
 and $\alpha\ge 2$.
 Now let 
 	\equ{new-al}
 {
 	\scrl_r(\alpha)
 	=\bigcup_{n=0}^{\infty}
 	\scrl^{(n)}_r(\alpha).
}

It is not difficult to verify that 
$\scrl_1(\alpha)=\{\alpha-1\}$ for all $\alpha\ge 2$,
$\scrl_{2}(2)=\{1\}$, $\scrl_{2}(3)=\{2, \frac{1}{2}\}$ and $\scrl_2(4)
=\left \{\frac{2i+1}{2i-1}: 
i\in \N\right \}$.
By definition, 
$\scrl_{r_1}(\alpha)\subseteq 
\scrl_{r_2}(\alpha)$ holds
for any $r_1,r_2\in \N$ with $r_1<r_2$ and $\alpha\ge 2$.

Using $\scrl_r(\alpha)$,
 we will establish 
the following two results 
on the existence of a tree
with prescribed maximum degree and Laplacian 
spectral radius.
One of them provides an equivalent condition for the existence of a tree $T$ with $\Delta(T)=r$ and $\mu(T)=\alpha$, 
and the other one 
 gives an equivalent condition for the existence of a tree $T$ with $\Delta(T)\le r$ and $\mu(T)=\alpha$.

\them{se4-th}
{
	For any $r\in \N$
	and  
	$\alpha\ge 2$, 
	 there exists a tree $T$ 
	with $\Delta(T)=r$ and $\mu(T)=\alpha$
	if and only if 
	there is a multiset $\{q_1,q_2,\ldots,q_r\}$ 
	of elements of $\scrl_r(\alpha)$ 
	such that 
	$\sum\limits_{i=1}^r q_i^{-1}=\alpha-r$.
}

\begin{theorem}\label{thm-1}
	For any $r\in \N$ and
	 $\alpha\ge 2$,
	there exists a tree $T$ with $\Delta(T)\leq r$ and $\mu(T)=\alpha$ if and only if $(\alpha-1)^{-1}\in \scrl_r(\alpha)$.
\end{theorem}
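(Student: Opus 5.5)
Our plan is to derive Theorem~\ref{thm-1} from Theorem~\ref{se4-th}, which we may assume. We first observe that $\alpha=2$ is degenerate. For $r\ge 1$, the tree $K_2$ has $\Delta=1\le r$ and $\mu=2$. Also $(\alpha-1)^{-1}=1=\alpha-1\in\scrl_r(2)$. So both sides hold and we may assume $\alpha>2$. The link between the two statements is the identity
\[
(\alpha-1)^{-1}=\alpha-1-s-\sum_{i=1}^{s}q_i^{-1}
\iff \sum_{i=1}^{s}q_i^{-1}+(\alpha-1)^{-1}=\alpha-(s+1).
\]
Since $\alpha-1\in\scrl_r(\alpha)$ and $(\alpha-1)^{-1}=(\alpha-1)^{-1}$ is the reciprocal of that element, the right-hand condition says exactly this: the multiset $\{q_1,\ldots,q_s,\alpha-1\}$ of $s+1$ elements of $\scrl_r(\alpha)$ has reciprocal sum $\alpha-(s+1)$. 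That is the condition of Theorem~\ref{se4-th} with $r$ replaced by $s+1$.

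\textbf{($\Leftarrow$).} Suppose $(\alpha-1)^{-1}\in\scrl_r(\alpha)$.
\begin{itemize}
\item If $(\alpha-1)^{-1}=\alpha-1$, then $\alpha=2$, which we have excluded.
\item Otherwise $(\alpha-1)^{-1}$ is produced at some stage $n\ge1$ by the rule, so $(\alpha-1)^{-1}=\alpha-1-s-\sum q_i^{-1}$ with $q_i\in\scrl_r^{(n-1)}(\alpha)$ and $1\le s\le r-1$.
\item By the identity, $\{q_1,\ldots,q_s,\alpha-1\}$ is a multiset of $r':=s+1\le r$ elements with reciprocal sum $\alpha-r'$.
\end{itemize}
We need these elements in $\scrl_{r'}(\alpha)$, not just $\scrl_r(\alpha)$. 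This is the main obstacle, because $\scrl_r$ grows with $r$. The remedy is to avoid invoking Theorem~\ref{se4-th} with $r'$ directly. Instead we invoke the constructive direction of its proof, with the realizing trees for the $q_i$ built using branching at most $r-1$. That proof presumably shows each $q\in\scrl_r(\alpha)$ is realized as $\alpha-1-d+\dots$ by a rooted tree $(T_q,v)$ in which the root has $d\le r-1$ children, every vertex has degree at most $r$, and the Perron vector satisfies the local ratio recursion. Joining a new vertex to the roots of $T_{q_1},\ldots,T_{q_s}$ and to one pendant edge (the element $\alpha-1$) gives a tree of maximum degree at most $r$ and $\mu=\alpha$. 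So we will restate the key lemma behind Theorem~\ref{se4-th} in a "$\Delta\le r$'' form, or quote it if the paper proves it that way.

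\textbf{($\Rightarrow$).} Let $T$ have $\Delta(T)\le r$ and $\mu(T)=\alpha>2$, and take a positive eigenvector $x$ of the signless Laplacian. We may use the signless Laplacian because trees are bipartite, so $L$ and $Q$ are similar. Choose a leaf $u$ with neighbour $w$. The eigen-equation at $u$ gives $x_w/x_u=\alpha-1$. We root $T$ at $u$ and define, for each non-root vertex $v$ with parent $p$, the ratio $q_v=x_p/x_v\cdot(\text{sign})$. Then the eigen-equation at $v$ reads
\[
\alpha-1-c_v-\sum_{\text{children }c}q_c^{-1}=q_v,
\]
where $c_v\le r-1$ is the number of children of $v$, and leaves give $q=\alpha-1$. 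Induction upward from the leaves shows every $q_v$ lies in $\scrl_r(\alpha)$; positivity of $x$ gives $q_v>0$. Finally the equation at the root $u$ gives $\alpha x_u=x_u+x_w$, which we must show is the same as $q_w=(\alpha-1)^{-1}$. Checking this direction of the ratio convention carefully, rather than quoting Theorem~\ref{se4-th}, is the second point needing care. This yields $(\alpha-1)^{-1}\in\scrl_r(\alpha)$.
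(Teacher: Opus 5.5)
Your proof is correct and is essentially the paper's. Your ($\Rightarrow$) direction is the leaf-rooted ratio induction of Proposition~\ref{pro2.6}, ending with the immediate check $q_w=x_u/x_w=(\alpha-1)^{-1}$; your ($\Leftarrow$) direction is the chain (ii)$\Rightarrow$(iii)$\Rightarrow$(iv)$\Rightarrow$(i) of Proposition~\ref{prop-3.4} with your multiset $\{q_1,\ldots,q_s,\alpha-1\}$. The realization-and-joining lemma you hedge on is exactly what Lemmas~\ref{lem-3.0} and~\ref{lem-3.2} prove, giving $s\le\Delta\le r$, which sidesteps your $\scrl_{r'}$ versus $\scrl_r$ obstacle; Lemma~\ref{lem-3.3} then gives $\mu=\alpha$ via the Perron--Frobenius fact that only the largest eigenvalue of $Q$ has a positive eigenvector, which you should state explicitly.
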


If $\alpha=2$, then 
$\alpha\in \lsrt$ and $1=(\alpha-1)^{-1}\in \scrl_1(\alpha)$.
For $\alpha>2$, 
if $T$ is a tree with $\mu(T)=\alpha$, then 
by  (\ref{eq-1}), 
$\Delta(T)\le \alpha-1$,
implying that 
$(\alpha-1)^{-1}\in \scrl_{\Floor{\alpha}-1}(\alpha)$
by
Theorem~\ref{thm-1}.
Conversely, 
if $(\alpha-1)^{-1}\in \scrl_{\lfloor\alpha\rfloor-1}(\alpha)$,
then Theorem~\ref{thm-1} supplies a nontrivial tree $T$ with $\mu(T)=\alpha$.
Hence the next conclusion follows.

\cor{se1-co1}
{
The set $\lsrt$ 
is exactly
the set of  $\alpha\ge 2$ such that 
$(\alpha-1)^{-1}\in \scrl_r(\alpha)$,
where $r=\Floor{\alpha}-1$.
}

By modifying the definition of 
$\scrl_r(\alpha)$, we 
define the set $\scrl(\alpha)$ 
recursively:  
$\alpha-1\in \scrl(\alpha)$; and for any multiset $\{q_1, q_2, \ldots, q_s\}$ of 
elements of 
$\scrl(\alpha)$,
where $s\in \N$, 
	if $q:=\alpha-1-s-\sum\limits_{1\leq i\leq s}q_i^{-1}>0$, 
	then $q\in \scrl(\alpha)$.
	It is not difficult to verify that 
	$$
	\scrl(\alpha)
	=\bigcup_{r\ge 1}\scrl_r(\alpha).
	$$


\cor{se1-co2}
{
	The set $\lsrt$ 
	is exactly
	the set of  $\alpha\ge 2$ such that 
	$(\alpha-1)^{-1}\in \scrl(\alpha)$.
}

\proof For any $\alpha\in \lsrt$, 
by Corollary~\ref{se1-co1},
$(\alpha-1)^{-1}\in \scrl_r(\alpha)$ for $r=\lfloor \alpha\rfloor -1$, and 
thus $(\alpha-1)^{-1}\in \scrl(\alpha)$.
Conversely, 
if $(\alpha-1)^{-1}\in
\scrl(\alpha)$, 
then 
$(\alpha-1)^{-1}\in
\scrl_{r}(\alpha)$
for some $r\in \N$, and 
Theorem~1.2 gives a tree $T$ with
$\Delta(T)\le r$ and 
$\mu(T)=\alpha$, 
implying that $\alpha\in \lsrt$.
Thus, Corollary~\ref{se1-co2} 
holds.
\proofend 

Let $T$ be a tree with 
$\mu(T)=\alpha$ and $\Delta(T)=r\ge 2$.
 Then 
(\ref{eq-1}) can be rewritten as 
 the inequality $\alpha+2-2\sqrt{\alpha}<r\leq \alpha-1$. 
This inequality is a necessary condition for the existence of a tree $T$ such that $\mu(T)=\alpha$ and $\Delta(T)=r$.
It is natural to ask 
whether 
for any $\alpha\in \lsrt$, 
this condition 
is sufficient 
for the existence of a tree $T$ 
with $\mu(T)=\alpha$ and $\Delta(T)=r$.


Applying Theorem~\ref{se4-th}, 
we show that in the case $\alpha=k^2$, where $k\in \N$,
this condition is sufficient 
for the existence of a tree $T$ 
with $\mu(T)=k^2$ and $\Delta(T)=r$.

\begin{theorem}\label{thm-2}
Let $\alpha=k^2$, where 
$k\in \N$ with $k\ge 2$.
Then,  for any $r\in \N$,
there exists a tree $T$ such that 
$\Delta(T)=r$ and $\mu(T)=k^2$
if and only if 
$(k-1)^2+2\le r\le k^2-1$.
\end{theorem}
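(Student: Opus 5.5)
Necessity comes from (\ref{eq-1}). If $\mu(T)=k^2$ and $\Delta(T)=r$, then $r\le k^2-1$. Also $k^2<r+2\sqrt{r-1}$, which rearranges to $(\sqrt{r-1}+1)^2>k^2$, so $\sqrt{r-1}>k-1$. Since $r$ is an integer, this gives $r-1\ge (k-1)^2+1$, that is, $r\ge (k-1)^2+2$. For $k\ge 2$ this bound forces $r\ge 3\ge 2$, so (\ref{eq-1}) indeed applies.

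For sufficiency we use Theorem~\ref{se4-th} with $\alpha=k^2$. For each $r$ with $(k-1)^2+2\le r\le k^2-1$ we must find $q_1,\dots,q_r\in\scrl_r(k^2)$ with $\sum_{i=1}^r q_i^{-1}=k^2-r$. Write $r=k^2-m$, so that $1\le m\le 2k-3$ and the target sum is $m$. The basic building block is a path-type recursion in which every step uses $s=1$. Starting from $x_0=k^2-1$, set $x_{j+1}=k^2-2-x_j^{-1}$. The fixed points satisfy $x+x^{-1}=k^2-2$, and the larger one is $x^*$ with $x^*+1/x^*=k^2-2$. This is a limit, however, and is never attained. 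We therefore use branching steps with larger $s$ to hit exact values.

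The cleaner route is to look for elements $q=k-1$ or similar rational values in $\scrl_r(k^2)$. From $q=k^2-1-s-\sum q_i^{-1}$ we take $s$ copies of a known $q'$. For instance, if $q'=k^2-1$, then $s$ copies give $k^2-1-s-s/(k^2-1)$, which is not integral in general. We will instead aim to show that $k-1\in\scrl_r(k^2)$. The candidate recursion is a self-similar ``balanced'' one: if every child equals $k-1$ and $s=k^2-k-1$ children are used, then $k^2-1-s-s/(k-1)$ must equal $k-1$. Substituting gives $k-1-(k^2-k-1)/(k-1)$, which is negative, so this fails. We try instead to realize $q=1/(k-1)$ or $q=k+1$, looking for a fixed point $q$ with $s$ children equal to $q$ satisfying $q+1+s(1+q^{-1})=k^2$. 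Writing $(q+1)(1+s/q)=k^2$, the choice $q=k-1$, $s=k(k-1)$ gives $k\cdot 2k$, which is too big. The choice $q=k$, $s=k(k-1)$ gives $(k+1)k$, also too big. The choice $s=q^2$ gives $(q+1)^2=k^2$, so $q=k-1$ and $s=(k-1)^2$.

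So $q=k-1$ is a fixed point of the recursion with $s=(k-1)^2\le r-1$ children all equal to $k-1$, which corresponds to the infinite $(k-1)^2+1$-regular tree. Since we need finitely generated elements, we will instead truncate. Using finite depth with leaves $k^2-1$, the values approach $k-1$ from above, so the root sums approach $m$ without reaching it exactly.

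The main obstacle is achieving exact equality. The plan is to exploit the freedom in $s$ near the root: use the available values $k^2-1,\dots$ together with a sequence of truncated values converging to $k-1$, so that one adjustable coordinate can be chosen exactly. Concretely, we will construct an explicit finite family of values in $\scrl_r(k^2)$ whose reciprocals are simple rationals (for example $1/(k-1)$, obtained by mixing children of types $k^2-1$ and lower types), choosing $j$ of the $q_i$ equal to $k-1$-like values and the rest equal to $k^2-1$, and solving $j/(k-1)+(r-j)/(k^2-1)=m$ for integer $j$, adjusting by one exceptional $q_i$. Verifying that these specific values genuinely lie in $\scrl_r(k^2)$ with an exact finite derivation is the step I expect to require the most case-specific computation.
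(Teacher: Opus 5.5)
Your necessity argument is correct, up to one small repair. You cannot use the conclusion $r\ge 3$ to justify applying (\ref{eq-1}); argue instead that $\Delta(T)\le 1$ gives $\mu(T)\le 2<k^2$. You also correctly reduce sufficiency to Theorem~\ref{se4-th}, and you find the right recursion: $(k-1)^2$ children, fixed point $k-1$, and a truncated chain from $k^2-1$ decreasing to $k-1$. But the sufficiency half stops where the real work begins. You never exhibit a multiset, and the mechanism you sketch cannot work as stated.

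Your equation $j/(k-1)+(r-j)/(k^2-1)=t$ (with $t=k^2-r$) forces $j=k(t-1)$ and $r-j=(k+1)(k-t)$. So it presupposes $k-1\in\scrl_r(k^2)$, which your own chain never reaches, and it only makes sense for $t\le k$. Worse, for $k\le t\le 2k-3$ no combination of values above $k-1$ can work at all. If every $q_i>k-1$, then $\sum_{i=1}^r q_i^{-1}<r/(k-1)=(k^2-t)/(k-1)\le k\le t$. So you need elements of $\scrl_r(k^2)$ lying strictly below $k-1$. Your proposal neither identifies one nor derives its membership; the vague ``one exceptional $q_i$'' is exactly the unproved content.

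The paper supplies two missing ideas.

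\textbf{Exact closed form.} It computes the truncated chain exactly. With $h(0)=k^2-1$ and $h(i+1)=2k-2-(k-1)^2/h(i)$, one gets $h(i)=(k-1)\frac{(i+1)k+1}{ik+1}$. Exactness then comes from the multiplicities, not from reaching $k-1$: taking $(j+1)k+1$ copies of $h(j)$ contributes exactly $\frac{jk+1}{k-1}$. For $1\le t\le k-1$, take $tk+1$ copies of $h(t-1)$ and $(k+1)(k-t-1)$ copies of $k^2-1$. This gives $r$ entries with reciprocal sum exactly $t$.

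\textbf{Crossing the fixed point.} Using $(k-1)^2+1$ (rather than $(k-1)^2$) copies of $h(k-2)=\frac{k^2-k+1}{k-1}$ gives $p(0)=k-1-\frac{k}{k^2-k+1}<k-1$. The map $x\mapsto 2k-2-(k-1)^2/x$ satisfies $f(x)-x=-(x-k+1)^2/x$, so iterating it from $p(0)$ moves downward. After $(k-1)^2-1$ steps, still positive, it reaches $a=\frac{(k-1)^2}{2k-1}$. Then $(k-1)(t-k+1)$ copies of $a$ together with $k(2k-t-2)+1$ copies of $h(2k-t-3)$ handle $k\le t\le 2k-3$.

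Without explicit derivations of this kind, the sufficiency direction is not established.
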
 

The remainder of the article is organized as follows. In Section~\ref{sec2}, we establish the necessity of Theorems~\ref{se4-th} and~\ref{thm-1}. In Section~\ref{sec3}, we prove the remaining parts, thereby completing the proofs of both theorems. We then apply Theorem~\ref{se4-th} in Section~\ref{sec4} to prove Theorem~\ref{thm-2}. Finally, Section~\ref{sec5} presents a problem for further study.

\section{
 The necessity 
of Theorems~\ref{se4-th}
and~\ref{thm-1}
	\label{sec2}
}

\subsection{A condition 
	guaranteeing $\mu(G)=\alpha$}

Let $G$ be a simple graph and 
let $Q(G)=D(G)+A(G)$. 
Merris ~\cite{Merr-1994} showed that the largest eigenvalue $\nu(G)$ of
$Q(G)$ satisfies $\nu(G) = 
\mu(G)$ whenever $G$ is bipartite.

\begin{theorem}
	[\cite{Merr-1994}]
	\label{Merr-1994}
If $G$ is a bipartite graph, then 
$\nu(G)=\mu(G)$. 		
\end{theorem}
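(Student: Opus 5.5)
The approach is the standard one: a diagonal similarity turns $L(G)$ into a matrix that is entrywise the signless Laplacian $Q(G)$ up to absolute values, and this is then combined with Perron--Frobenius theory. Suppose $G$ is bipartite with bipartition $V(G)=X\cup Y$. Let $S$ be the diagonal matrix with $S_{vv}=1$ for $v\in X$ and $S_{vv}=-1$ for $v\in Y$, so that $S=S^{-1}$. Conjugation by $S$ fixes the diagonal part $D(G)$. For an off-diagonal entry $(u,v)$ it multiplies the entry by $S_{uu}S_{vv}$. Every edge joins $X$ to $Y$, so $S_{uu}S_{vv}=-1$ for every edge $uv$. Hence $SA(G)S=-A(G)$ and
\[
S\,L(G)\,S \;=\; D(G)+A(G) \;=\; Q(G).
\]

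Because $L(G)$ and $Q(G)$ are similar, they have the same spectrum, and in particular the same largest eigenvalue. Therefore $\mu(G)=\nu(G)$. The argument needs no connectivity assumption: the construction of $S$ works componentwise, or directly from any bipartition of the whole graph.

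This route has essentially no obstacle. The only point to check is that every edge crosses the bipartition, which is exactly the definition of a bipartite graph. If one preferred not to use the similarity, an alternative is a Rayleigh-quotient argument. One has $x^{T}Q(G)x=\sum_{uv\in E}(x_u+x_v)^2$ and $x^{T}L(G)x=\sum_{uv\in E}(x_u-x_v)^2$. Flipping the sign of $x$ on $Y$ exchanges these two forms, so the two maxima over unit vectors coincide. The similarity argument is cleaner, however, and it gives equality of the entire spectra, so I would present that.
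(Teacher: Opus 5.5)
The paper gives no proof of this statement: it is quoted from Merris and used as a black box in Proposition~\ref{prop-1}, so there is no in-paper argument to compare against.

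Your proof is correct and is the standard one. With the signature matrix $S$ of the bipartition, $SL(G)S=Q(G)$, so $L(G)$ and $Q(G)$ have the same spectrum, with eigenvectors corresponding via $x\mapsto Sx$. Your Rayleigh-quotient variant is equally valid.

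The only blemishes are in wording. Your opening sentence says the similarity will be ``combined with Perron--Frobenius theory,'' but your argument never uses Perron--Frobenius and does not need it. In this paper, Perron--Frobenius enters only afterwards, in Proposition~\ref{prop-1}, to identify $\nu(G)$ through a positive eigenvector. Also, the similarity produces $Q(G)$ exactly, not merely ``up to absolute values.''
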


If $G$ is connected, then $Q(G)$ 
is irreducible and  the following 
conclusion follows from the
 Perron-Frobenius theorem
 \cite{Frob:1912,Perron:1907}.
 
 \begin{theorem}
 	[The Perron-Frobenius theorem]\label{thm-PF}
 	Let $G$ be a connected graph of order at least $2$. 
 	The largest eigenvalue $\nu(G)$ 
 	of $Q(G)$ 
 	has multiplicity 1 and 
there is an eigenvector 
$\vecx$ 
of $Q(G)$ corresponding to $\nu(G)$ in which all 
 	components are positive. 
 	Furthermore, $\nu(G)$ is the only eigenvalue of $Q(G)$ with this property.
 \end{theorem}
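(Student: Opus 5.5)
This statement is cited in the paper as a classical result, so I will not rebuild Perron--Frobenius theory from scratch. Instead I plan to derive it from the standard Perron--Frobenius theorem for nonnegative irreducible matrices. The first step is to make $Q(G)$ nonnegative and irreducible. All entries of $Q(G)=D(G)+A(G)$ are nonnegative. Since $G$ is connected with at least two vertices, the directed graph of $Q(G)$ (an arc $i\to j$ whenever the $(i,j)$ entry is positive) contains the arcs of $G$ in both directions. Hence it is strongly connected, and $Q(G)$ is irreducible. Because $Q(G)$ is real symmetric, all its eigenvalues are real, so its largest eigenvalue $\nu(G)$ is well defined.

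Next I will identify $\nu(G)$ with the Perron root and prove simplicity and positivity. The Perron--Frobenius theorem for an irreducible nonnegative matrix $M$ gives three facts. First, the spectral radius $\rho(M)$ is an eigenvalue. Second, it is algebraically simple. Third, it has an eigenvector with all entries positive. For a symmetric matrix, $\rho(M)=\max|\lambda_i|$, and this maximum is attained at an eigenvalue with $\lambda=\rho(M)$. Therefore $\nu(G)=\rho(Q(G))$, which yields multiplicity one and a positive eigenvector $\bar{\bf x}$.

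If a self-contained route is preferred, I would instead use the Rayleigh quotient. For any maximizing vector ${\bf y}$, the vector $|{\bf y}|$ of absolute values is also a maximizer, because every entry of $Q(G)$ is nonnegative. Take a maximizing vector ${\bf z}\ge 0$. Suppose some $z_u=0$, and let $v$ be a neighbour of $u$ with $z_v>0$; such a pair exists by connectivity. Then
\[(Q{\bf z})_u=d(u)z_u+\sum_{w\sim u}z_w>0=\nu z_u,\]
which contradicts $Q{\bf z}=\nu{\bf z}$. So every nonnegative eigenvector for $\nu(G)$ is strictly positive. Now suppose two independent eigenvectors for $\nu(G)$ existed. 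A suitable real linear combination of them is a nonzero eigenvector with a zero entry. Its absolute value is then a nonnegative eigenvector for $\nu(G)$ with a zero entry, which is a contradiction. Since $Q(G)$ is symmetric, geometric and algebraic multiplicities coincide, so $\nu(G)$ is simple.

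The last step is uniqueness: $\nu(G)$ is the only eigenvalue with a positive eigenvector. Suppose $Q\bar{\bf y}=\lambda\bar{\bf y}$ with $\bar{\bf y}>0$ and $\lambda\ne\nu(G)$. Eigenvectors of a symmetric matrix for distinct eigenvalues are orthogonal, so $\bar{\bf x}^T\bar{\bf y}=0$. This is impossible, since both vectors are strictly positive. The only delicate point in the whole argument is the sign and positivity step, which is where connectivity is used. Everything else is routine linear algebra for symmetric matrices.
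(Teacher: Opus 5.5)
Your main route is the same as the paper's: the paper observes that $Q(G)$ is irreducible because $G$ is connected and then invokes the Perron--Frobenius theorem. Your additional details correctly fill in what the paper leaves implicit, namely that $\nu(G)=\rho(Q(G))$ because $\rho(Q(G))$ is itself a real eigenvalue bounding all others in absolute value, and that uniqueness holds because eigenvectors of the symmetric matrix $Q(G)$ for distinct eigenvalues are orthogonal, which two positive vectors cannot be. Your optional self-contained Rayleigh-quotient argument is also sound, given the standard facts that $|{\bf y}|$ is a maximizer whenever ${\bf y}$ is and that every maximizer is an eigenvector for $\nu(G)$.
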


Applying Theorems \ref{Merr-1994} and  \ref{thm-PF}, we obtain a necessary and sufficient 
condition for $\mu(G)=\alpha$,
where $G$ is a connected bipartite graph.

\begin{proposition}\label{prop-1}
Let $G$ be a connected  bipartite graph of order at least $2$ 
and $\alpha\in \R$.
Then, $\mu(G)=\alpha$ if and only if there is a mapping $\phi$ from $V(G)$ to $\mathbb{R}^+$ such that for each $v\in V(G)$,
\begin{equation}\label{eq-3}
(\alpha-d_G(v))\phi(v)=\sum_{w\in N_G(v)}\phi(w).
\end{equation}
\end{proposition}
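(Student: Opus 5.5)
The plan is to translate the equation system (\ref{eq-3}) into an eigenvector statement for the signless Laplacian $Q(G)=D(G)+A(G)$. Then Theorem~\ref{thm-PF} identifies the eigenvalue, and Theorem~\ref{Merr-1994} transfers the conclusion to $\mu(G)$. First observe that, for a mapping $\phi:V(G)\to\mathbb{R}$, condition (\ref{eq-3}) holding at every vertex $v$ is equivalent to
\[
d_G(v)\phi(v)+\sum_{w\in N_G(v)}\phi(w)=\alpha\phi(v)\quad\text{for all } v\in V(G).
\]
If we write $\vecx$ for the vector with entries $\phi(v_1),\ldots,\phi(v_n)$, this says exactly $Q(G)\vecx=\alpha\vecx$.

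For necessity, assume $\mu(G)=\alpha$. Since $G$ is bipartite, Theorem~\ref{Merr-1994} gives $\nu(G)=\mu(G)=\alpha$. Since $G$ is connected of order at least $2$, Theorem~\ref{thm-PF} supplies an eigenvector $\vecx$ of $Q(G)$ for $\nu(G)$ with all components positive. Define $\phi(v_i)$ to be the $i$-th component of $\vecx$. Then $\phi$ maps into $\mathbb{R}^+$, and $Q(G)\vecx=\alpha\vecx$ is precisely (\ref{eq-3}) at every vertex.

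For sufficiency, suppose $\phi:V(G)\to\mathbb{R}^+$ satisfies (\ref{eq-3}). Then the vector $\vecx$ built from $\phi$ is a nonzero eigenvector of $Q(G)$ for the eigenvalue $\alpha$, and all of its components are positive. By the uniqueness clause of Theorem~\ref{thm-PF}, $\nu(G)$ is the only eigenvalue of $Q(G)$ that has a positive eigenvector, so $\alpha=\nu(G)$. Applying Theorem~\ref{Merr-1994} once more gives $\mu(G)=\nu(G)=\alpha$.

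No step here is a genuine obstacle; everything reduces to the two cited theorems. The one point that needs care is the uniqueness clause of Theorem~\ref{thm-PF}. If one wanted to justify it rather than cite it, the standard argument is as follows. Take the positive Perron eigenvector $\bar{\bf y}$ of the symmetric matrix $Q(G)$ for $\nu(G)$. If $\alpha\neq\nu(G)$, then $\vecx$ and $\bar{\bf y}$ are eigenvectors for distinct eigenvalues of a symmetric matrix, so they are orthogonal. This is impossible, because both vectors are entrywise positive and hence $\vecx^{T}\bar{\bf y}>0$.
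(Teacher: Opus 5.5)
Your proposal is correct and matches the paper's proof essentially step for step. The paper also rewrites (\ref{eq-3}) as the eigen-equation $Q(G)\bar{\mathbf x}=\alpha\bar{\mathbf x}$. For necessity it combines Theorem~\ref{Merr-1994} with the positive Perron eigenvector, and for sufficiency it uses the uniqueness clause of Theorem~\ref{thm-PF} followed by Theorem~\ref{Merr-1994}. Your orthogonality argument for the uniqueness clause is correct, but it only spells out something the paper simply cites.
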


\proof Assume that 
$V(G)=\{v_i: i\in \brk{n}\}$, where $n\ge 2$,
and $Q(G)$ is a 
matrix 
obtained according to the vertex ordering $v_1, v_2,\dots, v_n$ of $G$. 

Assume that 
$\mu(G)=\alpha$. 
Since $G$ is bipartite, 
$\nu(G)=\alpha$ by Theorem~\ref{Merr-1994}. 
Then, 
by Theorem~\ref{thm-PF},
there exists a mapping $\phi: V(G)\rightarrow \R$ such that 
$Q(G)\vecx =\alpha \vecx$,
where $\vecx=(\phi(v_1), \dots, \phi(v_n))^{\intercal}$.
It follows that for each $i\in \brk{n}$,
\equ{eq2}
{
d(v_i)\phi(v_i)
+\sum_{v_j\in N(v_i)}\phi(v_j)
=\alpha \phi(v_i).
}
Thus, (\ref{eq-3}) holds for each vertex $v$ in $G$.

On the other hand, if there is a mapping 
$\phi: V(G)\rightarrow \R$ such that (\ref{eq-3}) holds for each vertex $v$ in $G$, 
then (\ref{eq2}) holds for 
each $i\in \brk{n}$, 
implying that $Q(G)\vecx =\alpha \vecx$,
where $\vecx=(\phi(v_1), \dots, \phi(v_n))^{\intercal}$.
By Theorem~\ref{thm-PF}, 
$\alpha=\nu(G)$. 
Then, Theorem~\ref{Merr-1994}
implies that $\mu(G)=\alpha$. 
\proofend 

\subsection{The necessary parts of 
Theorems~\ref{se4-th}
and~\ref{thm-1}
}

Since every tree
is  bipartite, 
Proposition~\ref{prop-1}
holds for $T$.
We will apply Proposition~\ref{prop-1}
to show that 
$(\alpha-1)^{-1}\in \scrl_r(\alpha)$,
where $\alpha=\mu(T)$ and 
$r=\Delta(T)$.

We first introduce a set 
$\sett$
of directed trees. 
For a directed tree $\vect$
and $v\in V(\vect)$,
let $N^-_{\vect}(v)$ denote the set of  in-neighbors of $v$ in $\vect$
(i.e., the set of vertices 
$w$ so that 
$(w,v)$ is an arc in $\vect$), 
and 
let $id_{\vect}(v)$ denote 
the in-degree 
of $v$ in $\vect$
(i.e., $id_{\vect}(v)=|N^-_{\vect}(v)|$).
If $id_{\vect}(v)=0$, then $v$ is called 
a source of $\vect$.
Let $\sett$ be the set of directed trees $\vect$ 
that have exactly one source.
Clearly, for any $\vect\in \sett$, 
if $u$ is the unique source of $\vect$, 
then $id_{\vect}(v)=1$ 
for each $v\in V(\vect)\setminus \{u\}$.
Note that 
for any undirected tree 
$T$ and $u\in V(T)$, 
 there exists a unique orientation $\vect$ of $T$ 
 that has $u$
as its unique source.

The recursive definition of $\scrl_r(\alpha)$ has a natural interpretation in
terms of ratios of entries of a positive eigenvector.  Let $T$ be a tree
rooted at $u$, orient every edge away from $u$, and suppose that
$\varphi:V(T)\to\mathbb \R$ satisfies condition (\ref{eq-3}).  For every non-source
vertex $v$, let $v'$ be its parent and define
\[
\Psi(v):=\frac{\varphi(v')}{\varphi(v)}.
\]
Proposition~\ref{pro2.6} shows that each such ratio belongs to $\scrl_r(\alpha)$; thus the
set $\scrl_r(\alpha)$ captures the ratios that arise along oriented edges.

\prop{pro2.6}
{
	Let $T$ be a  tree  with $|V(T)|\ge 2$, 
	$\alpha=\mu(T)$
	and $r=\Delta(T)$.
	Assume that  
	$\phi: V(T)\rightarrow \R$
	is a mapping 
	such that for each $v\in V(T)$, 
	\begin{equation}\label{pro2.6-e1}
		(\alpha-d_T(v))\phi(v)=\sum_{w\in N_T(v)}\phi(w).
	\end{equation}
	Then the following conclusions hold:
	\begin{enumerate}
		[itemsep=0mm, label=(\roman*)]
		\item 
		$\frac{\phi(v')}{\phi(v)}\in \scrl_r(\alpha)$
		for every ordered pair 
		$(v', v)$ of adjacent vertices in $T$; 
		\item $(\alpha-1)^{-1}\in \scrl_r(\alpha)$; 
		and 
		\item if $s=d_T(u)$ for some vertex $u$ in $T$,  
		then there exists 
		a multiset $\{q_1,q_2,\ldots,q_s\}$ 
		of elements of $\scrl_r(\alpha)$ 
		such that $\sum\limits_{i=1}^s q_i^{-1}
		=\alpha-s$.
	\end{enumerate}
}

\proof 
We prove the following claims.
Let $\vect$ be any orientation of $T$ with a unique source,
and thus $\vect\in \sett$.
Clearly, $id_{\vect}(v)=1$ for 
each vertex in $\vect$, 
unless $v$ is the only source 
of $\vect$.
For each $v$ with $id_{\vect}(v)=1$, let 
$\Psi(v):=\frac{\phi(v')}{\phi(v)}$,
where $v'$ is the only vertex in $\vect$ such that $(v',v)$ 
is a directed edge in $\vect$.

For any $v\in V(\vect)$,
let $N^+_{\vect}(v)$ denote the set of out-neighbors of $v$ in $\vect$
(i.e., the set of vertices $w$ so that 
$(v,w)$ is an arc in $\vect$), 
and 
let $od_{\vect}(v)$ denote the out-degree of $v$ in $\vect$ 
(i.e., $od_{\vect}(v)=|N^+_{\vect}(v)|$ ).

\inclaim $\Psi(v)=\alpha-1$ 
for each $v\in V(\vect)$
with $od_{\vect}(v)=0$.

Since $od_{\vect}(v)=0$, 
$v'$ is the unique neighbor of $v$ in $T$. By (\ref{pro2.6-e1}),
\equ{pro2.6-e2}
{
	(\alpha-1)\phi(v)=\phi(v'),
}
implying that $\Psi(v)=\frac{\phi(v')}{\phi(v)}
=\alpha-1$.
Claim 1 holds.

\inclaim $\Psi(v)\in \scrl_r(\alpha)$ for each 
$v\in V(T)$ with $id_{\vect}(v)=1$.

We prove this claim
upward from the leaves. 
First, 
by Claim 1, $\Psi(v)=\alpha-1\in \scrl_r(\alpha)$  if $od_{\vect}(v)=0$.
Now assume that $v$ is a vertex 
in $T$  with 
$id_{\vect}(v)=1$ and 
$N^+_{\vect}(v)
=\{v_1,\ldots,v_t\}$,
where $t:=od_{\vect}(v)>0$, 
such that 
$\Psi(v_i)\in \scrl_r(\alpha)$ for each $i\in \brk{t}$.
By the assumption, 
$\Psi(v_i)=\frac{\phi(v)}{\phi(v_i)}$.
By (\ref{pro2.6-e1}), 
\begin{equation}
	\label{pro2.6-e3}
	(\alpha-t-1)\phi(v)
	=\phi(v')+
	\sum_{i=1}^t \phi(v_i).
\end{equation}
It follows that 
\equ{pro2.6-e4}
{
	\Psi(v)=\frac{\phi(v')}{\phi(v)}
	=\alpha-1-t-\sum_{i=1}^t
	\frac{\phi(v_i)}{\phi(v)}
	=\alpha-1-t-\sum_{i=1}^t
	\Psi(v_i)^{-1}.
}
As 
$t=d_T(v)-1\leq r-1$, 
by the definition of $\scrl_r(\alpha)$, $\Psi(v)\in \scrl_r(\alpha)$.
Hence Claim 2 holds.

\inclaim Statements (i) and (ii) hold.

For every ordered pair 
$(v', v)$ of adjacent vertices in $T$,  root the tree at $v'$ 
to get $\vect\in \sett$
so that 
$(v',v)$
is a directed edge in $\vect$.

By Claim 2, $
\frac{\phi(v')}{\phi(v)}=\Psi(v)\in \scrl_r(\alpha)$.
Thus, statement (i) holds.


By Claim 1, 
if $v_1v_2\in E(T)$ and $v_1$ is a leaf of $T$, 
then $\frac{\phi(v_2)}{\phi(v_1)}=
\alpha-1$,
implying that 
$\frac{\phi(v_1)}{\phi(v_2)}=
(\alpha-1)^{-1}$.
Thus, by  the conclusion of (i), 
we have 
$(\alpha-1)^{-1}
\in \scrl_r(\alpha)$.
Hence (ii) holds and 
Claim 3 follows.

Now assume that $u$ is a vertex in $T$ and $s=d_T(u)$.
We may assume that
$\vect$ is the orientation of $T$ 
such that $u$ is its unique source. Let
$N^+_{\vect}(u)
=\{u_1,\ldots,u_s\}$,
where $N^+_{\vect}(v)$
denotes the set of out-neighbors of a vertex $v$ in $\vect$.
Thus, by Claim 2, 
$q_i:
=\frac{\phi(u)}{\phi(u_i)}
=\Psi(u_i)$ for each $i\in \brk{s}$.

\inclaim $q_i\in \scrl_r(\alpha)$ 
for each $i\in \brk{s}$ and  $\sum\limits_{i=1}^{s}q_i^{-1}=\alpha-s$.

By Claim 2, $q_i=\Psi(u_i)\in \scrl_r(\alpha)$ for each $i\in \brk{s}$.
By (\ref{pro2.6-e1}), 
\begin{equation}
	\label{pro2.6-e5}
	(\alpha-s)\phi(u)
	=\sum_{i=1}^s \phi(u_i).
\end{equation}
It follows that 
\equ{}
{
	\sum_{i=1}^{s}q_i^{-1}
	=\sum_{i=1}^s \frac{\phi(u_i)}{\phi(u)}
	=\alpha-s.
}
Claim 4 holds.
Thus (iii) follows.
Hence the proposition holds.
\proofend

By Propositions~\ref{prop-1} and~\ref{pro2.6},
the following conclusion holds,
and thus the necessary parts of 
Theorems~\ref{se4-th}
and~\ref{thm-1} follow.

\cor{se2-co1}
{
Let $T$ be a tree of order at least $2$, 
and let $\alpha=\mu(T)$
and $r=\Delta(T)$.
Then $(\alpha-1)^{-1}\in \scrl_r(\alpha)$,
and for $s=d_T(u)$,
where $u$ is any vertex in $T$, 
there exists 
a multiset $\{q_1,q_2,\ldots,q_s\}$ 
of elements of $\scrl_r(\alpha)$ 
such that $\sum\limits_{i=1}^s q_i^{-1}
=\alpha-s$.
}

\section{Proofs of Theorems  \ref{se4-th} and \ref{thm-1}
\label{sec3}
}

\subsection{A mapping 
	$\Phi$ on $V(\vect)$ for $\vect\in \sett$}

Recall that $\sett$ is the set of directed trees $\vect$ with
exactly one source $u$.
For any real number $\alpha\ge 2$, 
let $\sett_{\alpha}$
denote the set of directed trees
$\vect$ with a unique 
source $u$
and a map $\Phi: V(\vect) \rightarrow \{-1\}\cup \R$ that 
satisfies 
\begin{equation}\label{eq-7}
\Phi(v)=
\alpha-1-od_{\vect}(v)
-\sum\limits_{w\in N^+_{\vect}(v)}
\Phi(w)^{-1}
\end{equation}
 for each 
 $v\in V(\vect)$,
 and $\Phi(v)>0$ for 
 every $v\ne u$.
If $v$ is a sink of $\vect$, then  $od_{\vect}(v)=0$
and $N^+_{\vect}(v)=\emptyset$,
implying that 
$\Phi(v)=\alpha-1$.

Note that for any $\alpha\ge 2$,
if $\vect\in \sett_{\alpha}$,
then the map $\Phi$ 
satisfying condition (\ref{eq-7})
is actually
uniquely determined by 
$\vect$ and $\alpha$. 
Let $\Phi_{\alpha, {\vect}}$ denote this map $\Phi$.

The one-vertex directed tree belongs to $\sett_\alpha$ for every $\alpha\ge2$.  We next determine $\sett_2$.  Let $\vect\in\sett_2$ have at least two vertices, let $v$ be a sink, and let $v'$ be its parent.  Since $\Phi_{2,\vect}(v)=1$, equation~(\ref{eq-7}) yields
\[
\Phi_{2,\vect}(v')
=1-od_{\vect}(v')-
\sum_{w\in N^+_{\vect}(v')}\Phi_{2,\vect}(w)^{-1}
\le -1.
\]
The definition of $\sett_2$ therefore forces $v'$ to be the source and
$\Phi_{2,\vect}(v')=-1$.  Equality also forces $od_{\vect}(v')=1$, so $v$ is its only child.  Hence, up to isomorphism, the one-vertex directed tree and the directed tree of order two are the only members of $\sett_2$.

 For the
directed tree of order two shown in Figure~\ref{F6}(a), the source value is
$\alpha-2-(\alpha-1)^{-1}$; hence this tree belongs to $\sett_\alpha$ precisely
when $\alpha=2$ or $\alpha>(3+\sqrt5)/2$.  
If $\vect$ is the orientation of
$K_{1,3}$ whose unique source is a leaf, then $\vect$ belongs to both
$\sett_5$ and $\sett_4$, as shown in Figures~~\ref{F6}(b) and~~\ref{F6}(c), respectively, but
it does not belong to $\sett_2$.
\begin{figure}[h]
		\begin{tikzpicture}
		\tikzset{vertex/.style = {shape=circle,fill, draw,minimum size=1em}}
		\tikzset{edge/.style = {->,> = latex'}}

		\tikzstyle{vertex}=[circle, fill=black, inner sep=0pt, minimum size=8pt]

		\node[vertex, label=above:$\alpha-2-\frac1{\alpha-1}$] (a) at (0,0.8) {};
		\node[vertex, label=below:$\alpha-1$] (b) at (0,-0.5) {};

		\draw[edge, thick] (a) to (b);
	
	\end{tikzpicture}
	\hspace{0.1\textwidth}
	\centering
	\centering
	\begin{tikzpicture}
		\tikzset{vertex/.style = {shape=circle,fill, draw,minimum size=1em}}
		\tikzset{edge/.style = {->,> = latex'}}

		\tikzstyle{vertex}=[circle, fill=black, inner sep=0pt, minimum size=8pt]

	\node[vertex, label=above:$\frac{7}{3}$] (a) at (0,0) {};
		\node[vertex, label=above:$\frac{3}{2}$] (b) at (1.5,0) {};
		\node[vertex, label=above:$4$] (c) at (3,0.8) {};
		\node[vertex, label=below:$4$] (d) at (3,-0.8) {};
		
		 \draw[edge, thick] (a) to (b);
		 \draw[edge, thick] (b) -- (c);
		 \draw[edge, thick] (b) -- (d);
	\end{tikzpicture}
\hspace{0.1\textwidth}
	\centering
	\begin{tikzpicture}

				\tikzset{vertex/.style = {shape=circle,fill, draw,minimum size=1em}}
		\tikzset{edge/.style = {->,> = latex'}}

		\tikzstyle{vertex}=[circle, fill=black, inner sep=0pt, minimum size=8pt]

		\node[vertex, label=above:$-1$] (a) at (0,0) {};
		\node[vertex, label=above:$\frac{1}{3}$] (b) at (1.5,0) {};
		\node[vertex, label=above:$3$] (c) at (3,0.8) {};
		\node[vertex, label=below:$3$] (d) at (3,-0.8) {};
		
		 \draw[edge, thick] (a) to (b);
		 \draw[edge, thick] (b) -- (c);
		 \draw[edge, thick] (b) -- (d);
	\end{tikzpicture}

\centerline{
	(a) A member of $\sett_{\alpha}$
	\hspace{2 cm} 
	(b) A member of $\sett_5$
		\hspace{2 cm} 
	(c) A member of $\sett_4$
	}
	
	\caption{In (a),  $\alpha=2$ or 
		$\alpha>\frac{3+\sqrt 5}2$,
		and in (b) and (c), 
		each directed tree is
		an orientation of $K_{1,3}$.
}

\relabel{F6}
\end{figure}

For any $r\in \N$, 
 $\alpha\ge 2$
and $q\in \{-1\}\cup \R$, 
let $\sett_{r, \alpha}(q)$ denote the set of trees 
$\vect\in \sett_{\alpha}$ such that 
\begin{enumerate}
	[itemsep=0mm, label=(\roman*)]
	\item 
	$\Phi_{\alpha,
		\vect}(u)=q$,
	where $u$ is the unique source of $\vect$;
	
	\item $od_{\vect}(u)\le r$ when $q=-1$, and 
	$od_{\vect}(u)\le r-1$ when $q>0$; 
	and 
	\item  
	$od_{\vect}(v)\le r-1$
	for each 
	$v\in V(\vect)\setminus \{u\}$.
\end{enumerate}

For example, 
the underlying directed
tree displayed in Figures \ref{F6}(b) and \ref{F6}(c)
belongs to both $\sett_{3, 5}(\frac{7}{3})$ and $\sett_{3, 4}(-1)$. 
Also, two directed trees of $\sett_{3, 3+\sqrt{2}}(-1)$ are shown in Figure \ref{fig3}. 

\begin{figure}[htp]
	\centering
	\begin{tikzpicture}
		
		\tikzset{vertex/.style = {shape=circle,fill, draw,minimum size=1em}}
		\tikzset{edge/.style = {->,> = latex'}}

		\tikzstyle{vertex}=[circle, fill=black, inner sep=0pt, minimum size=8pt]


	\node[vertex, label=left:$-1$] (a) at (0,0) {};
	\node[vertex, label=above:$2\sqrt{2}-2$] (b) at (2,1) {};
	\node[vertex, label=below:$2\sqrt{2}-2$] (c) at (2,-1) {};
	\node[vertex, label=right:$2+\sqrt{2}$] (d) at (4,1.5) {};
	\node[vertex, label=right:$2+\sqrt{2}$] (e) at (4,0.5) {};
	\node[vertex, label=right:$2+\sqrt{2}$] (f) at (4,-0.5) {};
	\node[vertex, label=right:$2+\sqrt{2}$] (g) at (4,-1.5) {};
		
		 \draw[edge, thick] (a) to (b);
		 \draw[edge, thick] (b) -- (d);
		 \draw[edge, thick] (b) -- (e);
		 \draw[edge, thick] (a) -- (c);
		 \draw[edge, thick] (c) -- (f);
		 \draw[edge, thick] (c) -- (g);
	\end{tikzpicture}
\hspace{0.1\textwidth}
	\centering
	\begin{tikzpicture}

				\tikzset{vertex/.style = {shape=circle,fill, draw,minimum size=1em}}
		\tikzset{edge/.style = {->,> = latex'}}

		\tikzstyle{vertex}=[circle, fill=black, inner sep=0pt, minimum size=8pt]

		\node[vertex, label=left:$-1$] (a) at (0,0) {};
		\node[vertex, label=above:$\frac{3}{2}\sqrt{2}$] (b) at (2,1.5) {};
		\node[vertex, label=above:$\frac{3}{2}\sqrt{2}$] (c) at (2,0) {};
		\node[vertex, label=above:$\frac{3}{2}\sqrt{2}$] (d) at (2,-1.5) {};
		\node[vertex, label=right:$2+\sqrt{2}$] (e) at (4,1.5) {};
		\node[vertex, label=right:$2+\sqrt{2}$] (f) at (4,0) {};
		\node[vertex, label=right:$2+\sqrt{2}$] (g) at (4,-1.5) {};
		
		 \draw[edge, thick] (a) to (b);
		 \draw[edge, thick] (a) -- (c);
		 \draw[edge, thick] (a) -- (d);
		 \draw[edge, thick] (b) -- (e);
		 \draw[edge, thick] (c) -- (f);
		 \draw[edge, thick] (d) -- (g);
	\end{tikzpicture}

\centerline{(a) ~~~~~~~~~~~~ 
	\hspace{5 cm}~~~~ (b) 
	  ~~~~}
	
	\caption{Two directed trees in the set $\sett_{3, 3+\sqrt{2}}(-1)$
}

\label{fig3}
\end{figure}

\subsection{Directed trees $\vect$
in $\sett_{r,\alpha}(-1)$}

\begin{lemma}\label{lem3.1}
Let $r\in \N$ and 
 $\alpha\ge 2$.
For any $q\in \{-1\}\cup \mathbb{R}^+$,
if $\vect\in \sett_{r, \alpha}(q)$,
then 
$\Phi_{\alpha, \vect}(v)\in \scrl_r(\alpha)$ 
for each $v\in V(\vect)$, 
unless 
 $v$ is the source of $\vect$ 
and $q=-1$.
\end{lemma}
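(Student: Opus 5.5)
We argue by induction on the height of a vertex, i.e., the length of the longest directed path from $v$ to a sink of $\vect$. Throughout we write $\Phi=\Phi_{\alpha,\vect}$. More precisely, we will show that every vertex $v$ of height $h$ satisfies $\Phi(v)\in\scrl^{(h)}_r(\alpha)$, unless $v$ is the source and $q=-1$. Since $\scrl^{(h)}_r(\alpha)\subseteq\scrl_r(\alpha)$, this gives the lemma.

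The base case is $h=0$. Then $v$ is a sink, so $od_{\vect}(v)=0$ and $N^+_{\vect}(v)=\emptyset$. By (\ref{eq-7}), $\Phi(v)=\alpha-1$, which lies in $\scrl^{(0)}_r(\alpha)$. This covers the one-vertex tree as well: there the source is a sink with value $\alpha-1>0$, so $q=\alpha-1$ and the source is not excluded.

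For the inductive step, let $v$ have height $h\ge1$ and not be excluded. Set $t=od_{\vect}(v)\ge1$ and $N^+_{\vect}(v)=\{w_1,\ldots,w_t\}$. Each $w_i$ has height at most $h-1$ and is not the source. Hence, by induction and monotonicity of the sets $\scrl^{(n)}_r(\alpha)$, we get $\Phi(w_i)\in\scrl^{(h-1)}_r(\alpha)$. In particular $\Phi(w_i)>0$, so $\Phi(w_i)^{-1}$ makes sense. By (\ref{eq-7}),
\[
\Phi(v)=\alpha-1-t-\sum_{i=1}^t\Phi(w_i)^{-1}.
\]
To place $\Phi(v)$ in $\scrl^{(h)}_r(\alpha)$ via the closure rule, we need two facts: $1\le t\le r-1$ and $\Phi(v)>0$.

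The bound $t\le r-1$ comes from the conditions defining $\sett_{r,\alpha}(q)$. If $v\neq u$, condition (iii) gives $od_{\vect}(v)\le r-1$ directly. If $v=u$, then $v$ is not excluded, so $q>0$, and condition (ii) gives $od_{\vect}(u)\le r-1$.

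Positivity of $\Phi(v)$ comes from the definition of $\sett_\alpha$. If $v\ne u$, then $\Phi(v)>0$ by that definition. If $v=u$, then $\Phi(u)=q>0$.

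The closure rule therefore gives $\Phi(v)\in\scrl^{(h)}_r(\alpha)$, completing the induction. There is no real obstacle here. The only point needing care is the excluded case: the source with $q=-1$ may have out-degree $r$ and a negative value, so it must be kept out of the inductive claim. It never appears as a child of another vertex, so excluding it does not break the induction for the remaining vertices.
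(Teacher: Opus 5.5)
Your proof is correct and follows the same route as the paper: induction upward from the sinks, using condition (ii) or (iii) of $\sett_{r,\alpha}(q)$ for the bound $t\le r-1$, the definition of $\sett_\alpha$ (or $q>0$ at the source) for positivity, and then the closure rule. Indexing by height, so that $\Phi(v)\in\scrl^{(h)}_r(\alpha)$, only makes explicit the well-foundedness that the paper's induction leaves implicit.
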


\begin{proof}
	We
	proceed by induction upward from the leaves.
	For any $v\in V(\vect)$ with 
	$od_{\vect}(v)=0$, 
	we have 
	$\Phi_{\alpha, \vect}(v)=\alpha-1\in \scrl_r(\alpha)$.

	Suppose that $v$ is not the source, $1\le od_{\vect}(v)\le r-1$, and the assertion holds for every child of $v$.  Since $\Phi_{\alpha,\vect}(v)>0$ by the definition of $\sett_{\alpha}$, equation~(\ref{eq-7}) and the recursive definition of $\scrl_r(\alpha)$ imply that $\Phi_{\alpha,\vect}(v)
	\in\scrl_r(\alpha)$.  The same argument applies to the source when $q>0$, because then its out-degree is at most $r-1$.

	Hence the conclusion holds.
\end{proof}

\begin{figure}[h]
	\centering
	\begin{tikzpicture}[
		>=Stealth,
		vertex/.style={circle,fill=black,inner sep=2pt},
		branch/.style={
			draw,
			dashed,
			rounded corners,
			minimum width=1.7cm,
			minimum height=1.8cm
		},
		every node/.style={font=\small}
		]
		
		\node[vertex,label=left:{$u$}] (u) at (0,2.7) {};
		
		\node[branch] (T1) at (-3,0) {};
		\node[branch] (T2) at (0,0) {};
		\node[branch] (Tm) at (3,0) {};
		
		\node[vertex,label=above left:{$u_1$}] (u1) at (-3,0.75) {};
		\node[vertex,label=above right:{$u_2$}] (u2) at (0,0.75) {};
		\node[vertex,label=above right:{$u_s$}] (um) at (3,0.75) {};
		
		\node at (-3,-0.25) {$\vect_1$};
		\node at (0,-0.25) {$\vect_2$};
		\node at (3,-0.25) {$\vect_s$};
		
		\node at (1.5,0) {$\cdots$};
		
		\draw[->] (u) -- (u1);
		\draw[->] (u) -- (u2);
		\draw[->] (u) -- (um);
		
	\end{tikzpicture}
	\caption{The branch-joining construction. Each dashed box represents
		a directed tree $\vec T_i$ with unique source $u_i$. A new source $u$
		is added, together with the arcs $(u,u_i)$ for
		$i\in\brk{s}$.}
	\label{branch}
\end{figure}

\begin{lemma}\label{lem-3.0}
	Let $r\in \N$ and 
	 $\alpha\ge 2$.
	Then, $\sett_{r, \alpha}(q)\neq \emptyset$ for each $q\in \scrl_r(\alpha)$.
\end{lemma}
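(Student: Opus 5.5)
We argue by induction on the level $n$ at which $q$ first appears, i.e.\ we prove that $\sett_{r,\alpha}(q)\neq\emptyset$ for every $q\in\scrl^{(n)}_r(\alpha)$, for every $n\ge 0$. Since $\scrl_r(\alpha)$ is the union (\ref{new-al}), this gives the lemma.

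\emph{Base case $n=0$.} Here $q=\alpha-1$. The one-vertex directed tree has its source $u$ as a sink, so (\ref{eq-7}) gives $\Phi(u)=\alpha-1$. Conditions (ii) and (iii) in the definition of $\sett_{r,\alpha}(q)$ hold trivially because $od(u)=0\le r-1$. Hence this tree lies in $\sett_{r,\alpha}(\alpha-1)$.

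\emph{Inductive step.} Let $q\in\scrl^{(n)}_r(\alpha)\setminus\scrl^{(n-1)}_r(\alpha)$. By definition there are $q_1,\ldots,q_s\in\scrl^{(n-1)}_r(\alpha)$ with $1\le s\le r-1$ and
\[
q=\alpha-1-s-\sum_{i=1}^s q_i^{-1}>0 .
\]
By the induction hypothesis, for each $i$ we may pick $\vect_i\in\sett_{r,\alpha}(q_i)$ with source $u_i$. Since $q_i>0$, we have $od_{\vect_i}(u_i)\le r-1$. Use the branch-joining construction of Figure~\ref{branch}: take disjoint copies of $\vect_1,\ldots,\vect_s$ and add a new source $u$ with arcs $(u,u_i)$, $i\in\brk{s}$. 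Call the result $\vect$.

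Define $\Phi$ on $\vect$ to agree with $\Phi_{\alpha,\vect_i}$ on each $\vect_i$, and set $\Phi(u)=q$. We check that $\vect\in\sett_{r,\alpha}(q)$.
\begin{itemize}
\item For a vertex $v$ of $\vect_i$, its out-neighbours and out-degree in $\vect$ are the same as in $\vect_i$. So (\ref{eq-7}) still holds at $v$, including at $u_i$, which now has in-degree $1$ but unchanged out-degree.
\item At $u$, (\ref{eq-7}) reads $\Phi(u)=\alpha-1-s-\sum_i q_i^{-1}=q$, which holds by the choice of $q$.
\item Every non-source vertex has positive $\Phi$-value: the old sources $u_i$ have $\Phi(u_i)=q_i>0$, and all other vertices already had positive values in $\vect_i$. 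Thus $\vect\in\sett_\alpha$ and $\Phi_{\alpha,\vect}=\Phi$, by uniqueness.
\item For the degree constraints, $od_{\vect}(u)=s\le r-1$, as required since $q>0$. Every other vertex has out-degree at most $r-1$: for the $u_i$ this is because $q_i>0$ forced $od(u_i)\le r-1$ in $\vect_i$, and for the remaining vertices it is condition (iii) for $\vect_i$.
\end{itemize}
Hence $\vect\in\sett_{r,\alpha}(q)$.

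The only delicate point is the degree bookkeeping at the old sources $u_i$. In $\vect_i$ a source may have out-degree up to $r$, but only when its value is $-1$. Because each $q_i$ is positive, the bound $od(u_i)\le r-1$ is available, so the $u_i$ can be turned into internal vertices without breaking condition (iii). Everything else is a direct verification.
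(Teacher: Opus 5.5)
Your proposal is correct and follows essentially the same route as the paper. Both argue by induction on the level $n$ in $\scrl^{(n)}_r(\alpha)$, use the one-vertex directed tree as the base case, and use the branch-joining construction (a new source $u$ with arcs to the sources of $\vec{T}_1,\ldots,\vec{T}_s$) for the inductive step; your observation that $q_i>0$ forces $od_{\vec{T}_i}(u_i)\le r-1$ is a more explicit version of the paper's remark that each old source gains one incoming arc and so has degree at most $r$.
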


\begin{proof}
We prove by induction on $n$ that $\sett_{r,\alpha}(q)\ne\varnothing$ for every
$q\in\scrl_r^{(n)}(\alpha)$.  If $n=0$, then $q=\alpha-1$, and the one-vertex directed tree belongs to $\sett_{r,\alpha}(q)$.  Assume the assertion holds at level $n$ and let $q\in\scrl_r^{(n+1)}(\alpha)$.  If $q\in\scrl_r^{(n)}(\alpha)$, there is nothing to prove.  Otherwise, there exist $1\le s\le r-1$ and $q_1,\ldots,q_s\in\scrl_r^{(n)}(\alpha)$ such that
\equ{branch-e1}
{
q=\alpha-1-s-\sum_{j=1}^{s}q_j^{-1}>0.
}
For each $j\in \brk{s}$, choose $\vect_j\in\sett_{r,\alpha}(q_j)$ by the induction hypothesis.  Form $\vect$ by adding a new source $u$ and the arcs from $u$ to the sources of $\vect_1,\ldots,\vect_s$,
as shown in Figure~\ref{branch}.
 By (\ref{eq-7}) and (\ref{branch-e1}), the value assigned to the new source $u$ is
 $\Phi_{\alpha,\vec T}(u)=q$.
  Moreover, $od_{\vect}(u)=s\le r-1$; each old source gains one incoming arc and therefore has underlying degree at most $r$; and all other degree bounds are unchanged.  Therefore $\vect\in\sett_{r,\alpha}(q)$.
\end{proof}

For any directed tree $\vect$,
let $\Delta(\vect)$ 
be the maximum degree of 
its underlying graph. 

\begin{lemma}\label{lem-3.2}
Let $r\in \N$ and 
$\alpha\ge 2$.
If there exists a multiset $\{q_1, q_2, \ldots, q_s\}$ of
elements of $\scrl_r(\alpha)$, where $1\le s\leq r$, such that 
$\sum\limits_{1\leq i\leq s}q_i^{-1}=\alpha-s$, then 
there exists $\vect \in 
\sett_{r, \alpha}(-1)$
with $s\le \Delta(\vect)\le r$.
\end{lemma}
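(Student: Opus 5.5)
The plan is to mimic the branch-joining construction from the proof of Lemma~\ref{lem-3.0}, with the new root now allowed out-degree $s\le r$.

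First, for each $i\in\brk{s}$, apply Lemma~\ref{lem-3.0} to $q_i\in\scrl_r(\alpha)$. This gives a directed tree $\vect_i\in\sett_{r,\alpha}(q_i)$ with unique source $u_i$ and $\Phi_{\alpha,\vect_i}(u_i)=q_i>0$. Next, form $\vect$ by taking disjoint copies of $\vect_1,\ldots,\vect_s$, adding a new vertex $u$, and adding the arcs $(u,u_i)$ for $i\in\brk{s}$, as in Figure~\ref{branch}. Then $u$ is the unique source of $\vect$.

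The next step is to check that $\vect\in\sett_{\alpha}$ with $\Phi_{\alpha,\vect}(u)=-1$. For every vertex $v\neq u$, the out-neighbourhood of $v$ in $\vect$ equals its out-neighbourhood in the copy of $\vect_i$ containing it. Hence, proceeding upward from the sinks, the restriction of $\Phi_{\alpha,\vect_i}$ satisfies (\ref{eq-7}) inside $\vect$, and every such value is positive. At the new source, (\ref{eq-7}) and the hypothesis $\sum_i q_i^{-1}=\alpha-s$ give
\[
\Phi_{\alpha,\vect}(u)=\alpha-1-s-\sum_{i=1}^{s}q_i^{-1}=\alpha-1-s-(\alpha-s)=-1 .
\]
So $\vect\in\sett_{\alpha}$, and condition (i) in the definition of $\sett_{r,\alpha}(-1)$ holds with $q=-1$.

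It remains to verify the degree conditions and the bounds $s\le\Delta(\vect)\le r$. We have $od_{\vect}(u)=s\le r$, which is what condition (ii) requires when $q=-1$. Each $u_i$ had out-degree at most $r-1$ in $\vect_i$, because $q_i>0$ and $\vect_i\in\sett_{r,\alpha}(q_i)$. Its out-degree is unchanged in $\vect$, and all other vertices keep their out-degrees, which are at most $r-1$. This gives condition (iii). For the underlying degrees, $u$ has degree $s$. Each $u_i$ has degree $od(u_i)+1\le r$. Every other non-source vertex has degree $od+1\le r$. Hence $s\le\Delta(\vect)\le r$.

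I expect no real obstacle. The only point needing care is the bookkeeping that the out-degree bound for the sources $u_i$ is $r-1$ rather than $r$; this holds precisely because each $q_i$ is positive, so condition (ii) for $\sett_{r,\alpha}(q_i)$ gives the stronger bound.
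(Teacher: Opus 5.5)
Your proof is correct and takes the same route as the paper. You apply Lemma~\ref{lem-3.0} to each $q_i$, join the resulting trees $\vect_i$ under a new source $u$, verify (\ref{eq-7}) at $u$ using $\sum_i q_i^{-1}=\alpha-s$, and read off $s\le\Delta(\vect)\le r$; your explicit note that each $u_i$ has out-degree at most $r-1$ because $q_i>0$ usefully spells out the step the paper compresses into ``since $\vect_i\in\sett_{r,\alpha}(q_i)$''.
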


\begin{proof}
 Assume that 
$\{q_1, q_2, \ldots, q_s\}$ is 
a multiset of $\scrl_r(\alpha)$, where $1\le s\leq r$, such that 
$\sum\limits_{1\leq i\leq s}q_i^{-1}=\alpha-s$.
By Lemma~\ref{lem-3.0}, 
there exists $\vect_i\in 
\sett_{r, \alpha}(q_i)$
for each $i\in \brk{s}$.
Consider $\vect_1, \vect_2,\ldots, \vect_s$ 
as pairwise vertex-disjoint copies.
Assume that 
 $\Phi_i$ is the mapping $\Phi_{\alpha, \vect_i}$ and $u_i$ is the unique source of $\vect_i$ for all $i\in \brk{s}$.
By definition, $\Phi_i(u_i)=q_i$ for each $i\in \brk{s}$.

Let $\vect$ be the directed tree obtained from $\vect_1, \vect_2, \ldots, \vect_s$ by adding a new vertex $u$ and adding a new directed edge $(u, u_i)$ for each $i\in \brk{s}$,
as shown in Figure~\ref{branch}.
Let $\Phi$ be the mapping defined by $\Phi(u)=-1$ and $\Phi(w)=\Phi_i(w)$ 
for all $w\in V(\vect_i)$, 
where $1\le i\le s$.

We now show that 
$\Phi$ satisfies condition (\ref{eq-7}) for every vertex $v$
in $\vect$.
For every $w\in V(\vect)\setminus \{u\}$,
$w\in V(\vect_i)$ for some $i\in \brk{s}$, and thus 
$\Phi(w)=\Phi_i(w)$ and 
$N^+_{\vect}(w)
=N^+_{\vect_i}(w)$.
It follows that for every $v\in V(\vect_i)$,
where $i\in \brk{s}$, 
as $\Phi_i$
satisfies condition (\ref{eq-7})
at $v$, 
$\Phi$ also 
satisfies condition (\ref{eq-7})
at $v$.
It remains to show that 
$\Phi$ 
satisfies condition (\ref{eq-7})
at vertex $u$.

By the given condition, we have 
$\sum\limits_{1\leq i\leq s}q_i^{-1}=\alpha-s$.
Thus,
\equ{lem-3.2-e1}
{
\Phi(u)=-1=\alpha-1-s-
\sum_{1\leq i\leq s}q_i^{-1}
=\alpha-1-s-
\sum_{1\leq i\leq s}\Phi(u_i)^{-1}.
}
Hence $\Phi$ is a mapping from $V(\vect)$ to $\{-1\}\cup \mathbb{R}^+$ satisfying condition (\ref{eq-7}).
It follows that $\vect\in \sett_{r,\alpha}(-1)$.
By the construction of $\vect$, $\Delta(\vect)\ge od_{\vect}(u)=s$.
Since $s\leq r$ and $\vect_i\in \sett_{r, \alpha}(q_i)$ for each $i\in \brk{s}$, 
we have $\Delta(\vect)\leq r$.
\end{proof}


For any $\vect\in \sett$,
let $\mu(\vect)=\mu(T)$ 
for its underlying tree $T$.
Next, we can show 
$\mu(\vect)=\alpha$ for each 
$\vect\in \sett_{r,\alpha}(-1)$.

\begin{lemma}\label{lem-3.3}
Let  $r\in \N$
and 
$\alpha\ge 2$.
For any $\vect\in \sett_{r, \alpha}(-1)$,
$\mu(\vect)=\alpha$.
\end{lemma}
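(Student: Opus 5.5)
We will construct a positive vector on the vertices of the underlying tree $T$ of $\vect$ that satisfies (\ref{eq-3}) at every vertex, and then appeal to Proposition~\ref{prop-1}. Let $u$ be the unique source of $\vect$ and write $\Phi=\Phi_{\alpha,\vect}$. By definition $\Phi(u)=-1$ and $\Phi(v)>0$ for every $v\ne u$. The idea is to turn the ratios $\Phi(v)$ back into eigenvector entries, reversing the computation in Proposition~\ref{pro2.6}. We set $\phi(u)=1$ and define $\phi$ top-down along the arcs: if $(v',v)$ is an arc, put
\[
\phi(v)=\frac{\phi(v')}{\Phi(v)}.
\]
Every non-source vertex has a unique in-neighbor and $\Phi(v)>0$, so $\phi$ is well defined and positive on all of $V(T)$. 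If $T$ has a single vertex, then $\Phi(u)=\alpha-1=-1$ is impossible since $\alpha\ge 2$. Hence $|V(T)|\ge 2$, and Proposition~\ref{prop-1} applies once (\ref{eq-3}) is verified.

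Next we check (\ref{eq-3}) at each vertex $v\ne u$. Let $v'$ be its parent and let $N^+_{\vect}(v)=\{w_1,\dots,w_t\}$, so that $d_T(v)=t+1$. Multiplying (\ref{eq-7}) by $\phi(v)$ and using $\Phi(v)\phi(v)=\phi(v')$ and $\Phi(w_i)^{-1}\phi(v)=\phi(w_i)$ gives
\[
\phi(v')=(\alpha-1-t)\phi(v)-\sum_{i=1}^t\phi(w_i).
\]
This is exactly $(\alpha-d_T(v))\phi(v)=\phi(v')+\sum_i\phi(w_i)$, which is (\ref{eq-3}). The case $t=0$ is included, and it simply recovers $\Phi(v)=\alpha-1$.

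It remains to check the source. Let $s=od_{\vect}(u)=d_T(u)$ and let $u_1,\dots,u_s$ be its children. Condition (\ref{eq-7}) at $u$ with $\Phi(u)=-1$ reads $-1=\alpha-1-s-\sum_i\Phi(u_i)^{-1}$, that is, $\sum_i\Phi(u_i)^{-1}=\alpha-s$. Since $\phi(u)=1$ and $\phi(u_i)=\Phi(u_i)^{-1}$, this is precisely $(\alpha-s)\phi(u)=\sum_i\phi(u_i)$, which is (\ref{eq-3}) at $u$.

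The main point needing care is this source computation. The value $-1$ is exactly what absorbs the missing parent term, so the extra ``$-1$'' in (\ref{eq-7}) (which stands for $\phi(\text{parent})/\phi(v)$ at non-source vertices) must be matched correctly against the absent parent of $u$. Once (\ref{eq-3}) holds at every vertex with $\phi>0$, Proposition~\ref{prop-1} gives $\mu(T)=\alpha$, that is, $\mu(\vect)=\alpha$. The degree constraints defining $\sett_{r,\alpha}(-1)$ are not needed for this lemma.
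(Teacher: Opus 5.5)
Your proposal is correct and follows the paper's proof: the same top-down definition $\phi(u)=1$, $\phi(v)=\phi(v')/\Phi_{\alpha,\vect}(v)$, then a check of (\ref{eq-3}) at the source and at the non-source vertices, then Proposition~\ref{prop-1}. The differences are cosmetic. You treat sinks as the case $t=0$ and handle $\alpha=2$ uniformly after ruling out the one-vertex tree, whereas the paper checks sinks separately and settles $\alpha=2$ by showing $\vect$ must be the order-two tree.
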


\begin{proof}
If $\alpha=2$ and $\vect\in \sett_{r, \alpha}(-1)$, 
it can be verified by the definition 
of $\sett_{r, \alpha}(-1)$ 
and (\ref{eq-7}) 
that $\vect$ is the directed tree 
of order $2$.
Then $\mu(\vect)=2$, 
and the result holds.

Now assume that $\alpha>2$
and $\vect\in \sett_{r, \alpha}(-1)$.
By definition,
$\vect$ is a directed tree with 
a unique source, say $u$.
By Lemma \ref{lem3.1},
$\Phi_{\alpha, \vect}(v)\in \scrl_{r}(\alpha)$ for each $v\in V(\vect)\setminus \{u\}$.
Let $\vect_1, \vect_2, \ldots, \vect_s$ be the components of $\vect-u$,
where $1\leq s\leq r$ and 
$\vect_i\in \sett_{\alpha}$ for each $i\in \brk{s}$.
Let $u_i$ be the unique 
source of $\vect_i$ and 
$q_i=\Phi_{\alpha, \vect}(u_i)$ for each $i\in \brk{s}$.
By (\ref{eq-7}), we have 
\begin{equation}\label{eq-8}
-1=\Phi_{\alpha, \vect}(u)=\alpha-1-s-\sum_{i=1}^{s}q_i^{-1}.
\end{equation}
In order to prove that $\mu(\vect)=\alpha$, 
by Proposition \ref{prop-1}, 
it suffices to show that the mapping $\phi: V(\vect)\rightarrow \R$ defined below satisfies condition  (\ref{eq-3}):
\begin{enumerate}
	[itemsep=0mm, label=(\roman*)]
\item $\phi(u)=1$; and
\item for each directed edge 
$(w_1, w_2)$ in $\vect$, 
$\phi(w_2)=\frac{\phi(w_1)}{\Phi_{\alpha, \vect}(w_2)}$.
\end{enumerate} 

The map $\varphi$ is well-defined.  Indeed, every
$v\in V(\vec T)\setminus\{u\}$ is reached from $u$ by a unique directed path
$u=v_0,v_1,\ldots,v_m=v$, and hence
$
\varphi(v)=\prod\limits_{j=1}^{m}\Phi_{\alpha,\vec T}(v_j)^{-1}.
$
Each $v_j$ is a non-source vertex, so
$\Phi_{\alpha,\vec T}(v_j)>0$ by the definition of
$\vec{\mathcal T}_{r,\alpha}(-1)$.  Consequently $\varphi(v)>0$ for every
$v$, and thus $\varphi:V(\vec T)\to\mathbb \R$ as claimed.

We can first show that condition (\ref{eq-3}) is satisfied for the unique source and all sinks of $\vect$.
As $u$ is the unique source 
of $\vect$ and 
$N_{\vect}^+(u)=\{u_i: 1\leq i\leq s\}$,
we have 
\begin{equation}\label{eq-9}
\sum_{z\in N_{\vect}(u)}\phi(z)
=\sum_{i=1}^{s}\phi(u_i)
=\sum_{i=1}^{s}\frac{\phi(u)}
{\Phi_{\alpha,{\vect}}(u_i)}
=\phi(u)\sum_{i=1}^{s}q_i^{-1}
=(\alpha-s)\phi(u),
\end{equation}
where the last step follows from (\ref{eq-8}).

For any sink $w$ of ${\vect}$, $\Phi_{\alpha,{\vect}}(w)=\alpha-1$ by (\ref{eq-7}).
If $w'$ is the unique vertex in $N^-_{\vect}(w)$, 
where $N^-_{\vect}(w)$ is the set of in-neighbors of $w$ in ${\vect}$,
then $\phi(w')=\Phi_{\alpha,{\vect}}(w)\phi(w)$ by the definition of $\phi$,
and 
\begin{equation}\label{eq-10}
\sum_{z\in N_{\vect}(w)}\phi(z)=\phi(w')
=\phi(w)\Phi_{\alpha,{\vect}}(w)=(\alpha-1)\phi(w).
\end{equation}

It remains to consider any vertex $w$ in ${\vect}$ which is neither the source nor a sink of ${\vect}$.
Let $N_{\vect}^{-}(w)=\{w'\}$ and 
$N_{\vect}^{+}(w)=\{w_1, w_2, \ldots, w_t\}.$ 
Then, by the definition of $\phi$,
\eqn{eq-11}
{
\phi(w')+\sum_{i=1}^t\phi(w_i)
&=&\Phi_{\alpha,{\vect}}(w)\phi(w)
+\sum_{i=1}^{t}
\frac{\phi(w)}{\Phi_{\alpha,{\vect}}(w_i)}
\nonumber \\
&=&\phi(w)\Bigg(\Phi_{\alpha,{\vect}}(w)
+\sum_{i=1}^{t}\Phi_{\alpha,{\vect}}(w_i)^{-1}\Bigg)
\nonumber \\
&=&(\alpha-1-t)\phi(w),
}
where the last step follows from (\ref{eq-7}).
Thus, condition (\ref{eq-3}) holds for all vertices in ${\vect}$, implying that $\mu({\vect})=\alpha$ by Proposition \ref{prop-1}.
\end{proof}

\subsection{Proofs of 
	Theorems~\ref{se4-th}
	and~\ref{thm-1}}

Now we apply 
Corollary~\ref{se2-co1}, 
Lemmas~\ref{lem-3.2} and 
\ref{lem-3.3}  to prove Theorem~\ref{se4-th}.

\vspace{2 mm}

\noindent {\it Proof of Theorem~\ref{se4-th}}.
Assume that $T$ is a tree with 
$\Delta(T)=r$ and $\mu(T)=\alpha\ge 2$.
By Corollary~\ref{se2-co1},
there exists a multiset 
$\{q_1,q_2,\ldots,q_r\}$ of elements of $\scrl_r(\alpha)$ 
such that $\sum_{i=1}^rq_i^{-1}=\alpha-r$.

Now assume that there exists a multiset 
$\{q_1,q_2,\ldots,q_r\}$ of elements of $\scrl_r(\alpha)$ 
such that $\sum_{i=1}^rq_i^{-1}=\alpha-r$.
By Lemma~\ref{lem-3.2},
there exists $\vect\in \sett_{r,\alpha}(-1)$
with $\Delta(\vect)=r$.
Then, by Lemma~\ref{lem-3.3},
$\mu(\vect)=\alpha$.

Hence Theorem~\ref{se4-th} holds.
\proofend 

We now prove the following proposition, from which Theorem \ref{thm-1} follows directly.

\begin{proposition}\label{prop-3.4}
For any $\alpha\ge 2$ and
$r\in \N$, 
the following statements are pairwise equivalent:
\begin{enumerate}[itemsep=0mm, label=(\roman*)]
\item there exists a tree $T$ with $\Delta(T)\leq r$ and $\mu(T)=\alpha$;
\item $(\alpha-1)^{-1}\in \scrl_r(\alpha)$;
\item there exists $a,b\in \scrl_r(\alpha)$ such that $ab=1$;
and 

\item 
there exists a multiset 
 $\{q_1, q_2, \ldots, q_s\}$
 of elements of $\scrl_r(\alpha)$, where $1\le s\leq r$, such that $\sum\limits_{i=1}^{s}q_i^{-1}=\alpha-s$.
\end{enumerate}
\end{proposition}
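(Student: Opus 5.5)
I will prove the cycle (i)$\Rightarrow$(ii)$\Rightarrow$(iii)$\Rightarrow$(iv)$\Rightarrow$(i).

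\emph{(i)$\Rightarrow$(ii).} Let $T$ be a tree with $\Delta(T)\le r$ and $\mu(T)=\alpha$. If $|V(T)|=1$ then $\mu(T)=0<2$, which is impossible, so $|V(T)|\ge 2$. Corollary~\ref{se2-co1} gives $(\alpha-1)^{-1}\in\scrl_{\Delta(T)}(\alpha)$. Since $\scrl_{r_1}(\alpha)\subseteq\scrl_{r_2}(\alpha)$ whenever $r_1\le r_2$, this lies in $\scrl_r(\alpha)$.

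\emph{(ii)$\Rightarrow$(iii).} Take $a=\alpha-1$, which lies in $\scrl_r(\alpha)$ by definition, and $b=(\alpha-1)^{-1}$.

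\emph{(iii)$\Rightarrow$(iv).} This is the step needing an idea, and the main obstacle. Given $a,b\in\scrl_r(\alpha)$ with $ab=1$, the natural tree is obtained by joining the source of a tree in $\sett_{r,\alpha}(a)$ to the source of a tree in $\sett_{r,\alpha}(b)$ (Lemma~\ref{lem-3.0}). The two ratios along that edge are mutually inverse, as they should be for an eigenvector.

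To extract the multiset, I re-root at the source $u$ of a tree $\vect_a\in\sett_{r,\alpha}(a)$ whose $\Phi$-value is $a$.
\begin{itemize}
\item Case $a=\alpha-1$: then $b=(\alpha-1)^{-1}$. Take $s=1$ and the single element $q_1=b$. Then $q_1^{-1}=\alpha-1=\alpha-s$, and $1\le r$.
\item Case $a\ne\alpha-1$: then $u$ has children $u_1,\dots,u_t$ with $1\le t\le r-1$ and $q_i=\Phi(u_i)\in\scrl_r(\alpha)$ by Lemma~\ref{lem3.1}. By (\ref{eq-7}),
\[
a=\alpha-1-t-\sum_{i=1}^t q_i^{-1}.
\]
Add $b$ to the multiset, so $s=t+1\le r$. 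Since $b^{-1}=a$,
\[
\sum_{i=1}^t q_i^{-1}+b^{-1}=\alpha-1-t=\alpha-s.
\]
\end{itemize}
The only point to check is that when $a\ne\alpha-1$, $u$ really is a non-leaf. This holds because a one-vertex tree has value $\alpha-1$, and we may choose $\vect_a$ to be the tree built in Lemma~\ref{lem-3.0} from the level at which $a$ first appears.

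\emph{(iv)$\Rightarrow$(i).} Lemma~\ref{lem-3.2} gives $\vect\in\sett_{r,\alpha}(-1)$ with $\Delta(\vect)\le r$, and Lemma~\ref{lem-3.3} gives $\mu(\vect)=\alpha$. The underlying tree is the required $T$.
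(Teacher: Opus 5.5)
Your proof is correct and follows the paper's cycle of implications, including the same key step in (iii)$\Rightarrow$(iv): write $a=\alpha-1-t-\sum_{i=1}^t q_i^{-1}$ with $1\le t\le r-1$, then append $b$ to get $s=t+1$. The paper reads this decomposition directly off the recursive definition (taking the least $n$ with $a\in\scrl^{(n)}_r(\alpha)$), so your detour through a tree in $\sett_{r,\alpha}(a)$ via Lemmas~\ref{lem-3.0} and~\ref{lem3.1} is valid but unnecessary; moreover, any such tree would do, since by (\ref{eq-7}) a source of out-degree $0$ forces $a=\alpha-1$.
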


\begin{proof}
We prove the cycle of 
implications.

(i)$\Rightarrow$(ii).
It follows from Corollary~\ref{se2-co1}.


(ii)$\Rightarrow$(iii).
Assume that $(\alpha-1)^{-1}\in \scrl_r(\alpha)$. 
Since $\alpha-1\in \scrl_r(\alpha)$,
(iii) holds. 

(iii)$\Rightarrow$(iv).
Assume that $ab=1$ for 
$a,b\in \scrl_r(\alpha)$.
If $a=b=\alpha-1$, then
$ab=1$ implies that $\alpha=2$.
It follows that $1\in \scrl_r(\alpha)$.
Then $q_1=1\in \scrl_r(\alpha)$
and $q_1^{-1}=\alpha-1$.

Now consider the case $a\ne \alpha-1$ or 
$b\ne \alpha-1$.
We may assume that $a\ne \alpha-1$.
As $a\in \scrl_r(\alpha)\setminus \{\alpha-1\}$,
there is a minimum $n\in \N$ such
that $a\in \scrl^{(n)}_r(\alpha)$.
By definition, there exists
$q_1,q_2,\ldots,q_t\in 
\scrl^{(n-1)}_r(\alpha)\subseteq \scrl_r(\alpha)$, where $1\le t\le r-1$,
such that 
\equ{prop-3.4-e1}
{
a=\alpha-1-t-\sum_{i=1}^t q_i^{-1}.
}
Let $s=t+1$ and $q_{s}=b$. Then
\equ{prop-3.4-e2}
{
\sum_{i=1}^s q_i^{-1}
=\alpha-t-1-a+\frac 1{b}
=\alpha-t-1=\alpha-s.
}
Hence (iv) follows from (iii).

(iv)$\Rightarrow$(i).
It follows directly from 
Lemmas \ref{lem-3.2} and \ref{lem-3.3}.

Hence the result holds.
\end{proof}

By 
Proposition~\ref{prop-3.4},
Theorem~\ref{thm-1} follows
directly.

\section{Proof of Theorem~\ref{thm-2}
	\label{sec4}
} 

Let $k\in \N$ with $k\ge 2$.
By (\ref{eq-1}), if $T$ is a tree 
with $\mu(T)=k^2$, then 
$(k-1)^2+2\le \Delta(T)\le k^2-1$.
Let $r_0=(k-1)^2+2$.
In this section, we first
establish several preliminary 
conclusions on the elements of  
$\scrl_{r_0}(k^2)$, and then 
apply Theorem~\ref{se4-th}
to complete the proof of 
Theorem~\ref{thm-2}.

For  any $x_1,x_2,\ldots,x_{s}\in \R$, 
let 
\equ{gamma-f}
{
\Gamma(x_1,x_2,\ldots,x_s)
=k^2-1-s-\sum_{i=1}^sx_i^{-1}.
}
Let $x^{[s]}$ 
denote $s$ copies of $x$. 
Then, 
$	\Gamma(x^{[s]})
	=k^2-1-s-s\, x^{-1}$.

\begin{lemma}
	\label{lem-gamma}
	For $1\le s\le r_0-1$
	and $x\in \scrl_{r_0}(k^2)$,
	if
	$\Gamma(x^{[s]})>0$, then 
	$\Gamma(x^{[s]})\in 
	\scrl_{r_0}(k^2)$.
\end{lemma}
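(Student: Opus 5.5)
This is essentially a direct unfolding of the closure property defining $\scrl_{r_0}(k^2)$, applied to the multiset consisting of $s$ copies of $x$. Since $\scrl_{r_0}(k^2)=\bigcup_{n\ge 0}\scrl^{(n)}_{r_0}(k^2)$ and the sets $\scrl^{(n)}_{r_0}(k^2)$ are nested, there is an index $n\ge 0$ with $x\in \scrl^{(n)}_{r_0}(k^2)$. I would set $q_1=q_2=\cdots=q_s=x$, so that all the $q_i$ lie in $\scrl^{(n)}_{r_0}(k^2)$.

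Next I would check the hypotheses of rule (ii) in the definition of $\scrl^{(n+1)}_{r_0}(k^2)$, taking $\alpha=k^2$ and $r=r_0$. The number of arguments satisfies $1\le s\le r_0-1$ by assumption. The rule does not require the $q_i$ to be distinct, because the paper treats $\{q_1,\ldots,q_s\}$ as a multiset, and in the formal statement the $q_i$ are simply elements of $\scrl^{(n)}_{r_0}(k^2)$ that may repeat. The candidate value is
\[
q=\alpha-1-s-\sum_{i=1}^s q_i^{-1}=k^2-1-s-s\,x^{-1}=\Gamma(x^{[s]}),
\]
and it is positive by hypothesis. Note also that $x>0$, since every element of $\scrl_{r_0}(k^2)$ is positive, so $x^{-1}$ is defined.

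The rule then gives $\Gamma(x^{[s]})\in\scrl^{(n+1)}_{r_0}(k^2)\subseteq\scrl_{r_0}(k^2)$. There is no genuine obstacle. The only point to watch is that repeated entries are allowed, and this follows from how the definition is phrased.
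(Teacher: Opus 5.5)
Your proposal is correct and matches the paper's proof, which simply says the claim follows directly from the defining recursion for $\scrl_{r_0}(k^2)$ applied to $s$ copies of $x$. You spell this out by choosing $n$ with $x\in\scrl^{(n)}_{r_0}(k^2)$ and applying the closure rule to conclude that $\Gamma(x^{[s]})\in\scrl^{(n+1)}_{r_0}(k^2)$.
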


\proof 
 This follows directly from the defining recursion for
$\scrl_{r_0}(k^2)$, applied to $s$ copies of $x$.
\proofend

For any integer $i\ge 0$, define 
\equ{fuct-h}
{
h(i)
=\frac {((i + 1)k + 1)(k - 1)}
{ik + 1}
}
and 
\equ{}
{
	p(i)=\frac{\left(k-1\right) \left(k^{3}-2 k^{2}-\left(i-1\right) k-1\right)}{k^{3}-2 k^{2}-\left(i-2\right) k-1}.
}

\lemm{lem4-1}
{
Let $k\in \N$ with $k\ge 2$.
Then 
\begin{enumerate}
	[itemsep=0mm, label=(\roman*)]
\item 
$h(i)\in 
\scrl_{r_0}(k^2)$ for all integers $i\ge 0$; 

\item 
$p(i)\in 
\scrl_{r_0}(k^2)$ for all integers $i$ with $0\le i\le (k-1)^2-1$; and

\item 
$\frac{(k-1)^2}{2k-1}\in  \scrl_{r_0}(k^2)$.

\end{enumerate} 
}

\proof (i) Obviously, $h(i)>0$
for all $i\ge 0$.
Note that $h(0)=k^2-1\in \scrl_{r_0}(k^2)$.
Assume that $h(i)\in \scrl_{r_0}(k^2)$, where $i\ge 0$.
Direct computation shows that 
$$
	h(i+1)=\frac {((i + 2)k + 1)(k - 1)}
	{(i+1)k + 1}
	=
	k^2-1- (k-1)^2-
	\frac{(k-1)^2}{h(i)}
	=\Gamma(h(i)^{[r_0-2]})
	\in \scrl_{r_0}(k^2),
$$
where the last step 
follows from Lemma~\ref{lem-gamma}.
By definition,  the result holds.

(ii) For any integer $i$ with 
$-1\leq i\leq(k-1)^2-1$,  
it can be verified  
\equ{le42-2-e1}
{
	k^3-2k^2-(i-1)k-1\geq k-1>0.
}
Thus, by (\ref{le42-2-e1}), 
for $0\le i\le (k-1)^2-1$,
both numerator and denominator of $p(i)$ are positive, implying that $p(i)>0$.

By (i), $h(k-2)
=\frac{k^2-k+1}{k-1}\in 
\scrl_{r_0}(k^2)$.
Direct 
computation shows that 
$$
	p(0)=\frac{\left(k-1\right) \left(k^{3}-2 k^{2}+k-1\right)}{k^{3}-2 k^{2}+2 k-1}
	=k^2-1-(r_0-1)-\frac{r_0-1}
	{\frac{k^{2}-k+1}{k-1}}
	=\Gamma(h(k-2)^{[r_0-1]})
	\in \scrl_{r_0}(k^2).
$$
Assume inductively that $p(i)\in\scrl_{r_0}(k^2)$, 
where 
$0\le i\le (k-1)^2-2$.
Then, 
$$
	p(i+1)=\frac{\left(k-1\right) \left(k^{3}-2k^2-ik-1\right)}{k^{3}-2 k^{2}-(i-1)k-1}
	=k^2-1-(r_0-2)- \frac{r_0-2}{p(i)}
	=\Gamma(p(i)^{[r_0-2]})
	\in \scrl_{r_0}(k^2).
$$
Thus (ii) holds.

(iii) By (ii), 
$p((k-1)^2-1)\in \scrl_{r_0}(k^2)$.
Direct computation shows that 
\equ{p6-01-e4}
{
p((k-1)^2-1)=
\frac{\left(k-1\right)\cdot  \left(k^{3}-2\cdot k^{2}-\left((k-1)^2-2\right)\cdot k-1\right)}{k^{3}-2\cdot k^{2}-\left((k-1)^2-3\right)\cdot k-1}
=\frac{\left(k-1\right)^{2}}{2 k-1}.
}
Thus,  
$\frac{\left(k-1\right)^{2}}{2 k-1}\in \scrl_{r_0}(k^2)$ by (ii), and (iii) holds.
\proofend 

We now 
prove Theorem~\ref{thm-2}.

\vspace{2 mm}

\noindent 
{\it Proof of Theorem~\ref{thm-2}}.
By (\ref{eq-1}), 
if $T$ is a tree with $\mu(T)=k^2$ and $\Delta(T)=r$,
then $(k-1)^2+2\le r\le k^2-1$.

Now we assume that $k\ge 2$
and $r$ is any integer 
with 
$(k-1)^2+2\le r\le k^2-1$.
In order to show the existence 
of a tree $T$ with $\mu(T)=k^2$ and $\Delta(T)=r$, 
by Theorem~\ref{se4-th}, 
it suffices to 
prove the following claim.

\noindent {\bf Claim *}:
There exists a  multiset $\{q_1,q_2,\ldots,q_r\}$ 
	of elements of $\scrl_r(k^2)$ 
	such that 
\equ{thm-2-e1}
{
\sum_{i=1}^r q_i^{-1}=k^2-r.
}
Set $t:=k^2-r$. 
So $r=k^2-t$.
As
$(k-1)^2+2\le r\le k^2-1$, 
we have 
$1\le t\le 2k-3$. 

\incase $1\le t\le k-1$.

Note that $r_0=(k-1)^2+2$.
Thus, $r_0\le r$, implying that  
$\scrl_{r_0}(k^2)\subseteq 
\scrl_r(k^2)$.
By Lemma~\ref{lem4-1}, 
$h(i)\in \scrl_{r_0}(k^2)\subseteq 
\scrl_r(k^2)$ for all 
$i\ge 0$.
Define
\begin{equation}
	x\coloneqq (k+1)(k-t-1),
	\qquad
	y\coloneqq tk+1,
	\qquad
	j\coloneqq t-1.
	\label{thm-2-e2}
\end{equation}

The assumptions of this case imply
$x\geq 0,\, y>0$ and 
$j\geq 0$.
We 
take \(x\) copies of \(h(0)\) and \(y\) copies of \(h(j)\),
where $x=0$ is allowed.
The number of elements
in the multiset is
\begin{align}
	x+y=(k+1)(k-t-1)+tk+1
	=k^2-t=r.
	\label{thm-2-e3}
\end{align}
Direct computation shows that 
\equ{thm-2-e4}
{
	\frac{x}{h(0)}+\frac{y}{h(j)}
	=\frac{(k+1)(k-t-1)}{(k+1)(k-1)}+\frac{(tk+1)((t-1)k+1)}{(tk+1)(k-1)}
	=\frac{t(k-1)}{k-1}
	=k^2-r.
}
Hence Claim *
holds in Case~1.

\incase \(k\leq t\leq 2k-3\).

By Lemma~\ref{lem4-1}, 
$a:=\frac{(k-1)^2}{2k-1}\in 
\scrl_{r_0}(k^2)\subseteq 
\scrl_r(k^2)$.
Define
\begin{equation}
	u\coloneqq (k-1)(t-k+1),
	\qquad
	v\coloneqq k(2k-t-2)+1,
	\qquad
	\ell\coloneqq 2k-t-3.
	\label{thm-2-e5}
\end{equation}
The assumptions of this case imply
$u\geq 0,\, v>0$ and $
	\ell\geq 0$.
	
Take \(u\) copies of \(a\) and \(v\) copies of \(h(\ell)\).
Note that  
\begin{align}
	u+v
	=(k-1)(t-k+1)+k(2k-t-2)+1
	=k^2-t=r.
	\label{thm-2-e6}
\end{align}
Direct computation 
shows that 
\eqn{thm-2-e7}
{
	\frac{u}{a}+\frac{v}{h(\ell)}
	&=&\frac{(k-1)(t-k+1)(2k-1)}{(k-1)^2}
	+\frac{k(2k-t-2)+1}{\frac{\left(k-1\right) \left(2 k^{2}-k t-2 k+1\right)}{2 k^{2}-k t-3 k+1}}
\nonumber \\
&=&\frac{-2k^2+(2t+3)k-t-1}
{k-1}
+\frac{2 k^{2}-k t-3 k+1}{k-1}
	=t=k^2-r.
}
Hence Claim *
holds in Case~2.

Therefore, by Claim *
and Theorem~\ref{se4-th}, 
 there exists
a tree $T$ with $\mu(T)=k^2$ and $\Delta(T)=r$.
Hence Theorem~\ref{thm-2} holds.
\proofend

\section{Further Research
\label{sec5}
}

By~(\ref{eq-1}), every tree $T$ with $\mu(T)=\alpha\ge 4$ 
and $\Delta(T)=r$
satisfies
\equ{eq7-1}
{
	\alpha+2-2\sqrt{\alpha}
	<r\le \alpha-1.
}
This naturally leads to the following problem.

\prom{prob7-1}
{
	Let $\alpha$ and $r$ be positive integers with $\alpha\ge 4$. 
	If
	\[
	\alpha+2-2\sqrt{\alpha}<r\le \alpha-1,
	\]
	does there always exist a tree $T$ such that
	$\mu(T)=\alpha$ and $\Delta(T)=r$?
}


In the present article, we resolve Problem~\ref{prob7-1} only for
$\alpha=k^2$, where $k\ge 2$ is an integer; see
Theorem~\ref{thm-2}.
We are not aware of a solution beyond
the square case.

\vspace{3mm} 

\noindent {\bf Conflict of interest}: The authors declare that they have no known competing financial interests or personal
relationships that could have appeared to influence the work reported in this paper.

\noindent {\bf Data availability statement}:  Not applicable.

\section*{Acknowledgements}
This research is supported by 
the NSFC (Nos. 12101347 and 12371340)
and the Natural Science Foundation 
of Shandong Province (No. ZR2021QA085).

\def \LAA {{\it Linear Algebra Appl.} }
\def \JCTB {{\it Combin. Theory, Ser. B} }
\def \JGT {{\it J. Graph Theory} }
\def \GC {{\it Graphs Comb.} }
\def \DM {{\it Discrete Math.} }
\def \JMC {{\it J. Math. Chem.} }
\def \AJC 
{{\it Australas. J. Combin.} }
\def \EJC 
{{\it Electron. J. Comb.} }
\def \EUJC 
{{\it Eur. J. Comb.} }
\def \SIAM
{{\it SIAM J. Discrete Math.} }

\end{document}